\newcommand{\sca}[2]{\langle#1,#2\rangle}
\newcommand{\barr}{\overline}
\newcommand{\beq}{\begin{equation}}
\newcommand{\eeq}{\end{equation}}
\newcommand{\Supp}{\textrm{Supp~}}
\renewcommand{\Re}{{\rm Re\,}}
\renewcommand{\Im}{{\rm Im\,}}
\newtheorem{theorem}{Theorem}[section]
\newtheorem{lemma}[theorem]{Lemma}
\newtheorem{proposition}[theorem]{Proposition}
\theoremstyle{definition}
\newtheorem{Remark}[theorem]{Remark}
\newtheorem{Example}{Example}
\newcommand{\ie}{\emph{i.e.}}
\newcommand{\eg}{\emph{e.g.}}
\newcommand{\cf}{\emph{cf.}}
\newcommand{\eps}{\varepsilon}
\newcommand{\Com}{\mathbb{C}}
\newcommand{\Real}{\mathbb{R}}
\newcommand{\sgn}{\mathop{\mathrm{sgn}}\nolimits}
\newcommand{\dist}{\mathop{\mathrm{dist}}\nolimits}
\newcommand{\Dom}{\mathsf{Dom}}
\newcommand{\Ran}{\mathsf{Ran}}
\newcommand{\Num}{\mathsf{Num}}
\newcommand{\sii}{L^2}
\newcommand{\CS}{\mathcal{S}}
\numberwithin{equation}{section}
\definecolor{DarkBlue}{rgb}{0,0.1,0.7}
\newcommand\soutD{\bgroup\markoverwith
{\textcolor{DarkBlue}{\rule[.01ex]{2pt}{1pt}}}\ULon}
\newcommand{\Hm}[1]{\leavevmode{\marginpar{\tiny%
$\hbox to 0mm{\hspace*{-0.5mm}$\leftarrow$\hss}%
\vcenter{\vrule depth 0.1mm height 0.1mm width \the\marginparwidth}%
\hbox to
0mm{\hss$\rightarrow$\hspace*{-0.5mm}}$\\\relax\raggedright #1}}}
\begin{document}

\title{\textbf{Pseudospectra of the Schr\"odinger operator
with a discontinuous complex potential}}

\author{Rapha\"el Henry$^{a}$ \ and \ David Krej\v{c}i\v{r}\'ik$^{b}$}

\date{\small
\emph{
\begin{quote}
\begin{itemize}
\item[$a)$] 
D\'epartement de Math\'ematiques, 
Universit\'e Paris-Sud, B\^at.~425, 
91405 Orsay Cedex, France;
raphael.henry@math.u-psud.fr.%
\\
\item[$b)$]
Department of Theoretical Physics, Nuclear Physics Institute ASCR,
25068 \v{R}e\v{z}, Czech Republic;
krejcirik@ujf.cas.cz.%
\end{itemize}
\end{quote}
}
\medskip
29 September 2015}

\maketitle

\begin{abstract}
\noindent
We study spectral properties of the Schr\"odinger
operator with an imaginary sign potential on the real line.
By constructing the resolvent kernel, 
we show that the pseudospectra of this operator are highly non-trivial, 
because of a blow-up of the resolvent at infinity.
Furthermore, we derive estimates on the location of eigenvalues
of the operator perturbed by complex potentials.
The overall analysis demonstrates striking differences 
with respect to the weak-coupling behaviour of the Laplacian.
\bigskip
\begin{itemize}
\item[\textbf{Keywords:}] 
pseudospectra, non-self-adjointness, 
Schrödinger operators, discontinuous potential,
weak coupling, Birman-Schwinger principle

\item[\textbf{MSC (2010):}] 34L15, 47A10, 47B44, 81Q12

\end{itemize}
\end{abstract}

\newpage
\section{Introduction}
%
Extensive work has been done recently in understanding 
the spectral properties of non-self-adjoint operators
through the concept of \emph{pseudospectrum}.
Referring to by now classical monographs by 
Trefethen and Embree~\cite{Trefethen-Embree} and Davies~\cite{Davies_2007}, 
we define the pseudospectrum of an operator~$T$
in a Hilbert space~$\mathcal{H}$
to be the collection of sets
\begin{equation}\label{pseudospectrum}
  \sigma_\varepsilon(T) 
  :=   
  \sigma(T) \cup
  \left\{ z \in \Com : \, \|(T-z)^{-1}\|> \varepsilon^{-1}
  \right\}
  \,,
\end{equation}
parametrised by $\eps > 0$,
where $\|\cdot\|$ is the operator norm of~$\mathcal{H}$. 
If~$T$ is self-adjoint (or more generally normal), 
then~$\sigma_\varepsilon(T)$ is just an $\eps$-tubular neighbourhood
of the spectrum $\sigma(T)$.
Universally, however, the pseudospectrum is a much more reliable 
spectral description of~$T$ than the spectrum itself.
For instance, it is the pseudospectrum that measures 
the instability of the spectrum under small perturbations 
by virtue of the formula
\begin{equation}\label{instability}
  \sigma_\varepsilon(T) = \bigcup_{\|U\|\leq 1}\sigma(T+\varepsilon U)\,.
\end{equation}

Leaving aside a lot of other interesting situations,
let us recall the recent results when~$T$ is a differential operator.
As a starting point we take the harmonic-oscillator Hamiltonian 
with complex frequency, which is also known as the rotated 
or Davies' oscillator
(see~\cite[Sec.~14.5]{Davies_2007} for a review and references).
Although the complexification has a little effect on the spectrum
(the eigenvalues are just rotated in the complex plane),
a careful spectral analysis reveals drastic changes 
in basis and other more delicate spectral properties of the operator,
in particular, the spectrum is highly unstable against small perturbations,
as a consequence of the pseudospectrum containing 
regions very far from the spectrum. 
Similar peculiar spectral properties have been established 
for complex anharmonic oscillators 
(to the references quoted in~\cite[Sec.~14.5]{Davies_2007},
we add \cite{Henry,Mityagin-2013} for the most recent results),
quadratic elliptic operators 
\cite{Pravda-Starov_2008,Hitrik-Sjostrand-Viola_2013,Viola_2013},
complex cubic oscillators 
\cite{SK,Henry_2014,KSTV,Novak_2015},
and other models 
(see the recent survey~\cite{KSTV} and references therein). 

A distinctive property of the complexified harmonic oscillator
is that the associated spectral problem is explicitly 
solvable in terms of special functions.
A powerful tool to study the pseudospectrum in the situations
where explicit solutions are not available 
is provided by microlocal analysis
\cite{Davies_1999-NSA,Zworski_2001,Dencker-Sjostrand-Zworski_2004}.
The weak point of the semiclassical methods is the usual
hypothesis that the coefficients of the differential operator 
are smooth enough
(\eg~the potential of the Schr\"odinger operator must be at least continuous),
and it is indeed the case of all the models above.
Another common feature of the differential operators 
whose pseudospectrum has been analysed so far 
is that their spectrum consists of discrete eigenvalues only.

The objective of the present work is to enter an unexplored area
of the pseudospectral world by studying the pseudospectrum
of a non-self-adjoint Schr\"odinger operator whose 
\emph{potential is discontinuous} 
and, at the same time, such that the \emph{essential spectrum is not empty}.
Among various results described below, 
we prove that the pseudospectrum is non-trivial,
despite the boundedness of the potential.
Namely, we show that the norm of the resolvent
can become arbitrarily large outside a fixed neighbourhood of its spectrum. 
We hope that our results will stimulate further analysis
of non-self-adjoint differential operators with singular coefficients. 

\section{Main results}\label{Sec.results}
%
In this section we introduce our model 
and collect the main results of the paper.
The rest of the paper is primarily devoted to proofs,
but additional results can be found there, too.

\subsection{The model}
Motivated by the role of step-like potentials as toy models in quantum mechanics, 
in this paper we consider the Schr\"odinger operator in $\sii(\Real)$
defined by
\begin{equation}\label{operator}
  H := -\frac{d^2}{dx^2} + i\,\sgn(x)
  \,, \qquad
  \Dom(H) := W^{2,2}(\mathbb{R})
  \,.
\end{equation}
In fact, $H$~can be considered as an infinite version 
of the $\mathcal{PT}$-symmetric square well introduced in~\cite{Znojil_2001}
and further investigated in \cite{Znojil-Levai_2001,Siegl-2011-50}.

Note that~$H$ is obtained as a bounded perturbation 
of the (self-adjoint) Hamiltonian of a free particle in quantum mechanics,
which we shall simply denote here by $-\Delta$.
Consequently, $H$~is well defined (\ie\ closed and densely defined).
In fact, $H$~is m-sectorial with the numerical range
(defined, as usual, by the set of all complex numbers $(\psi,H\psi)$
such that $\psi \in \Dom(H)$ and $\|\psi\|=1$)
coinciding with the closed half-strip
\begin{equation}\label{Num}
  \Num(H) = \overline{\CS}
  \,, \qquad \mbox{where} \qquad
  \CS := [0,+\infty)+i\,(-1,1)
  \,.
\end{equation}

The adjoint of~$H$, denoted here by~$H^*$, 
is simply obtained by changing~$+i$ to~$-i$ in~\eqref{operator}.
Consequently, $H$~is neither self-adjoint nor normal. 
However, it is $\mathcal{T}$-self-adjoint 
(\ie\ $H^*=\mathcal{T}H\mathcal{T}$), 
where~$\mathcal{T}$ is the antilinear operator of complex conjugation
(\ie\ $\mathcal{T}\psi:=\overline{\psi}$).
At the same time, $H$~is $\mathcal{P}$-self-adjoint, 
where~$\mathcal{P}$ is the parity operator defined by
$(\mathcal{P}\psi)(x):=\psi(-x)$.
Finally, $H$~is $\mathcal{PT}$-symmetric in the sense
of the validity of the commutation relation
$[H,\mathcal{PT}]=0$.

Due to the analogy of the time-dependent Schr\"odinger equation 
for a quantum particle subject to an external electromagnetic field
and the paraxial approximation for a monochromatic light propagation 
in optical media \cite{Longhi_2009}, 
the dynamics generated by~\eqref{operator} 
can experimentally be realised using optical systems.
The physical significance of $\mathcal{PT}$-symmetry is 
a balance between gain and loss~\cite{Brody-Graefe_2012}.

\subsection{The spectrum}
As a consequence of~\eqref{Num}, 
the spectrum of~$H$ is contained in~$\overline{\CS}$.
Moreover, the $\mathcal{PT}$-symmetry implies that
the spectrum is symmetric with respect to the real axis.
By constructing the resolvent of~$H$
and employing suitable singular sequences for~$H$,
we shall establish the following result.
\begin{proposition}\label{propSpectrum}
We have
\beq\label{spectrum}
  \sigma(H) = \sigma_{\mathrm{ess}}(H) 
  = [0,+\infty) + i \, \{-1,+1\}\,.
\eeq
\end{proposition}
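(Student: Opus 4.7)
The overall strategy is to establish both inclusions in \eqref{spectrum} by combining Weyl singular sequences (for the essential-spectrum lower bound) with an explicit construction of the resolvent kernel (for the upper bound on the full spectrum). Because the numerical range bound \eqref{Num} already forces $\sigma(H) \subset \overline{\CS}$, the task is to pinpoint exactly the two boundary half-lines $[0,+\infty) + i\{-1,+1\}$ inside this closed half-strip.

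For the inclusion $[0,+\infty) + i\{-1,+1\} \subseteq \sigma_{\rm ess}(H)$ I would use translated plane-wave Weyl sequences. Given $\lambda \geq 0$, fix $\chi \in C_c^\infty(\mathbb{R})$ with $\|\chi\|_{L^2} = 1$ and set
\[
  \psi_n^{+}(x) \;:=\; n^{-1/2}\,\chi\!\left(\frac{x - 2n^2}{n}\right) e^{i\sqrt{\lambda}\,x}.
\]
Then $\mathrm{supp}\,\psi_n^{+} \subset (0,+\infty)$ for $n$ large, $\|\psi_n^{+}\| = 1$, and $\psi_n^{+} \rightharpoonup 0$ in $L^2(\mathbb{R})$. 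Since $\sgn(x) = 1$ on this support, a direct differentiation yields $\|(H - \lambda - i)\psi_n^{+}\| = O(1/n)$, so $\lambda + i \in \sigma_{\rm ess}(H)$. A mirror-image construction on the left half-line with phase $e^{-i\sqrt{\lambda}\,x}$ handles the conjugate point $\lambda - i$.

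For the opposite inclusion I would solve $(H - z)u = f$ explicitly for $z$ off the two half-lines. Defining $k_\pm(z)$ as the square root of $z \mp i$ with $\Im k_\pm > 0$ (well defined precisely because $z \mp i \notin [0,+\infty)$), the equation reduces on each half-line to a constant-coefficient ODE whose $L^2$-decaying fundamental solutions at $\pm\infty$ are $e^{\pm i k_\pm x}$. A standard variation-of-parameters Green's function on each half-line, joined by continuity of $u$ and $u'$ at $x = 0$, produces a $2 \times 2$ linear system whose determinant is proportional to $k_+(z) + k_-(z)$. This denominator never vanishes since $\Im k_\pm > 0$, so the resolvent kernel $G(x,y;z)$ exists, and its exponential decay in $|x-y|$ (again thanks to $\Im k_\pm > 0$) makes the induced integral operator bounded on $L^2(\mathbb{R})$, for instance via Schur's test. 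Thus such $z$ lies in the resolvent set, and combined with the previous step we obtain $\sigma(H) = [0,+\infty) + i\{-1,+1\} = \sigma_{\rm ess}(H)$.

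The main obstacle, as I see it, is the careful verification that the integral operator built from the formal kernel $G(\cdot,\cdot;z)$ really inverts the concrete $W^{2,2}(\mathbb{R})$-realisation of $H$ from both sides, i.e. the correct treatment of the matching conditions at $x = 0$ where $\sgn$ jumps, so that the resulting $u$ lies in $W^{2,2}(\mathbb{R})$ and $(H - z)u = f$ holds pointwise almost everywhere. A secondary subtlety is the branch choice for $\sqrt{\,\cdot\,}$ as $z$ approaches each half-line from either side, and confirming that the resolvent norm genuinely blows up there, which is precisely what rules out the possibility that our construction merely missed those boundary points.
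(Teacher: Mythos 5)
Your proposal is correct and follows essentially the same route as the paper: singular (Weyl) sequences supported on the far-right or far-left half-line and modulated by the appropriate plane wave give the inclusion $[0,+\infty)+i\{-1,+1\}\subseteq\sigma_{\mathrm{ess}}(H)$, while an explicit Green's function built by variation of parameters on each half-line with matching at $x=0$ (yielding the denominator $k_+(z)+k_-(z)$, nonvanishing off the two half-lines) plus a Schur test gives the reverse inclusion. The two subtleties you flag — checking that the formal kernel actually lands in $W^{2,2}(\Real)$ and choosing the square-root branch consistently — are precisely the points the paper addresses after its Proposition~\ref{propIntegral}.
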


The fact that the two rays $[0,+\infty) \pm i$ 
form the essential spectrum of~$H$ is expectable, 
because they coincide with the spectrum of 
the shifted Laplacian $-\Delta \pm i$ in $\sii(\Real)$
and the essential spectrum of differential operators
is known to depend on the behaviour of their coefficients 
at infinity only
(\cf~\cite[Sec.~X]{Edmunds-Evans}).
The absence of spectrum outside the rays is less obvious.

In fact, the spectrum in~\eqref{spectrum} is purely continuous,
\ie~$\sigma(H)=\sigma_\mathrm{c}(H)$,
for it can be easily checked that no point from the set
on the right hand side of~\eqref{spectrum} can be an eigenvalue
of~$H$ (as well as~$H^*$). An alternative way how to \emph{a priori} 
show the absence of the residual spectrum of~$H$,
$\sigma_\mathrm{r}(H)$,
is to employ
the $\mathcal{T}$-self-adjointness of~$H$ (\cf~\cite[Sec.~5.2.5.4]{KS-book}). 

\subsection{The pseudospectrum}
Before stating the main results of this paper, 
let us recall that a closed operator~$T$ is said to have 
\emph{trivial pseudospectra} if,
for some positive constant $\kappa$, we have
\[
  \forall \varepsilon>0
  \,, \qquad
  \sigma_\varepsilon(T) \subset 
  \left\{z : \, \dist\big(z,\sigma(T)\big)\leq\kappa\,\varepsilon
  \right\}
  \,,
\]
or equivalently,
\beq\label{estTriv}
  \forall z\in \Com \setminus \sigma(T)
  \,, \qquad
  \|(T-z)^{-1}\| \leq \frac{\kappa}{\dist\big(z,\sigma(T)\big)}
  \,.
\eeq
Normal operators have trivial pseudospectra,
because for them the equality holds in~\eqref{estTriv} with $\kappa=1$.

In view of~\eqref{Num},
in our case \eqref{estTriv}~holds with $\kappa=1$  
if the resolvent set is replaced by $\Com \setminus \overline{\CS}$. 
However, the following statement implies that~\eqref{estTriv} 
cannot hold inside the half-strip~$\CS$.
\begin{theorem}\label{thmEstResIntro}
For all $\varepsilon>0$, there exists a positive constant~$r_0$ such that, 
for all $z\in\CS$ with $\Re z \geq r_0$,
\beq\label{EstResIntro}
  (1-\varepsilon)\,\frac{\Re z}{\sqrt{1-(\Im z)^2}}
  \leq \|(H-z)^{-1}\|\leq 
  4\,(1+\varepsilon)\,\frac{\Re z}{1-|\Im z|}\,.
\eeq
\end{theorem}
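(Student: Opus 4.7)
The plan is to exploit an explicit resolvent kernel $G(x,y;z)$. On each half-line $\Real_\pm$ the equation $(H-z)u=0$ admits fundamental solutions $e^{\pm\lambda_\pm x}$, where $\lambda_\pm := \sqrt{\pm i-z}$ with $\Re\lambda_\pm>0$, and $G$ is obtained by glueing the two decaying solutions across $x=0$ via continuity of $u$ and $u'$. The resulting kernel has four pieces (one for each quadrant of $(x,y)$), built out of $e^{-\lambda_\pm|x-y|}$ and $e^{-\lambda_\pm(|x|+|y|)}$, with reflection/transmission coefficients involving $\lambda_+\pm\lambda_-$.

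For $\Re z$ large I would use the asymptotics $\lambda_\pm \sim i\sqrt{\Re z}+(1\mp\Im z)/(2\sqrt{\Re z})$, which give $|\lambda_\pm|\sim\sqrt{\Re z}$, $\Re\lambda_\pm\sim(1\mp\Im z)/(2\sqrt{\Re z})$, $\lambda_++\lambda_-\sim 1/\sqrt{\Re z}$ and $\lambda_+-\lambda_-\sim 2i\sqrt{\Re z}$; in particular the reflection coefficient $(\lambda_+-\lambda_-)/(\lambda_++\lambda_-)$ is of order $\Re z$. For the upper bound in \eqref{EstResIntro} I would apply Schur's test
\[
  \|(H-z)^{-1}\|^{2}\leq \Big(\sup_{x}\int|G(x,y)|\,dy\Big)\Big(\sup_{y}\int|G(x,y)|\,dx\Big).
\]
The dominant contribution to each $L^{1}$ integral comes from the reflection piece of $G$: it is the product of the $\sim\Re z$ reflection coefficient and the slow decay length $1/\Re\lambda_\pm\sim 2\sqrt{\Re z}/(1\mp\Im z)$, divided by the pre-factor $|2\lambda_\pm|\sim 2\sqrt{\Re z}$, giving exactly $\sim\Re z/(1-|\Im z|)$ with a numerical constant no worse than $4(1+\varepsilon)$.

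For the lower bound I would test the resolvent on an oscillating wavepacket $\phi(y):=c_{0}\,e^{iky}\chi(y/L)$, where $\chi$ is a fixed bump, the carrier frequency $k=\Im\lambda_\pm\sim\sqrt{\Re z}$ is chosen to resonate with the oscillating phase of $G$ near the origin, and the width $L\sim\sqrt{\Re z}$ matches the decay length $1/\Re\lambda_\pm$. A direct computation using the explicit kernel shows that, outside the support of $\phi$,
\[
  (H-z)^{-1}\phi(x)\simeq A(\phi)\bigl[e^{-\lambda_+ x}\chi_{(0,\infty)}(x)+e^{\lambda_- x}\chi_{(-\infty,0)}(x)\bigr],
\]
with amplitude $A(\phi)\sim\sqrt{\Re z}\cdot L\sim\Re z$ produced by the resonance between $\phi$ and the boundary kernel of $G$. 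Computing the norms of $\phi$ and of $(H-z)^{-1}\phi$ from these approximations (the $L^{2}$ norms of $e^{-\lambda_\pm x}$ on the relevant half-line being $1/(2\Re\lambda_\pm)\sim\sqrt{\Re z}/(1\mp\Im z)$) and then optimising in $L$ and in the profile $\chi$ yields $\|(H-z)^{-1}\|\gtrsim\Re z/\sqrt{1-(\Im z)^{2}}$.

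The main obstacle is extracting the sharp constants $(1-\varepsilon)$ and $4(1+\varepsilon)$: this requires carrying the $o(1/\sqrt{\Re z})$ corrections in the $\lambda_\pm$ asymptotics through both the Schur computation and the wavepacket analysis, and, for the lower bound, checking that the error terms introduced by the finite support of $\chi$ — so that $\phi$ is an honest $L^{2}$ function rather than the formal plane wave $e^{iky}$ — are indeed negligible for $\Re z$ large. A secondary point is that the upper bound features $1-|\Im z|$ while the lower bound features the strictly smaller quantity $\sqrt{1-(\Im z)^{2}}$; the former is what the Schur test naturally produces, whereas the latter is what the geometric mean of the two one-sided decay scales $1/\Re\lambda_\pm$ gives in the wavepacket construction.
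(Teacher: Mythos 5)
Your upper-bound argument coincides with the paper's: write out the explicit kernel, take the asymptotics of $k_\pm(z)=\sqrt{\pm i-z}$, and apply Schur's test to the $L^1$-integrals of the kernel; the paper obtains $\sup_x\int|\mathcal R_z(x,y)|\,dy \sim 2\Re z\bigl[(1+\Im z)^{-1}+(1-\Im z)^{-1}\bigr]\le 4\Re z/(1-|\Im z|)$, which is exactly the structure you describe. (One small slip: you wrote $\lambda_\pm\sim i\sqrt{\Re z}+(1\mp\Im z)/(2\sqrt{\Re z})$, but for the minus sign the leading term must be $-i\sqrt{\Re z}$; your derived asymptotics for $\lambda_++\lambda_-$ and $\lambda_+-\lambda_-$ are only consistent with that corrected sign, so this is a typo rather than an error in the plan.)

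Your lower-bound argument, while morally the same (a wavepacket with carrier frequency $\sim\sqrt{\Re z}$ and width $\sim\sqrt{\Re z}$ concentrated near $x=0$), differs in the choice of test function, and the paper's choice is noticeably cleaner. Instead of a compactly supported bump $c_0e^{iky}\chi(y/L)$, the paper takes $f_0(x)=e^{-\overline{k_+(z)}\,x}\,\chi_{(0,\infty)}(x)$. This is already an honest $L^2$ function (since $\Re k_+>0$), it has automatically the right carrier frequency $\Im\overline{k_+}$ and decay length $1/\Re k_+$, and --- crucially --- because it is precisely the complex conjugate of the decaying fundamental solution on $\Real_+$, the inner integral $\int_0^\infty e^{-k_+y}\,\overline{e^{-k_+y}}\,dy=\int_0^\infty e^{-2\Re k_+\,y}\,dy$ evaluates exactly, with no cutoff error and no optimisation over a profile. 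Restricting to $x<0$, the resolvent image is then an explicit multiple of $e^{k_-x}$, and comparing $\|(H-z)^{-1}f_0\|$ (computed on the negative half-line) with $\|f_0\|$ gives $\tfrac{\|(H-z)^{-1}f_0\|}{\|f_0\|}\ge\bigl(2\sqrt{\Re k_+\Re k_-}\,|k_++k_-|\bigr)^{-1}\sim \Re z/\sqrt{1-(\Im z)^2}$ directly, so the constant $(1-\varepsilon)$ falls out without the error bookkeeping you anticipate at the end of your proposal. Your construction would eventually give the same scaling, but at the cost of tracking the contribution of $\chi'$ and the finite support through every step; the paper's pseudomode is the optimisation of your $\phi$ carried out in closed form. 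Your final remark --- that the Schur test naturally yields $1-|\Im z|$ while the pseudomode naturally yields the geometric mean $\sqrt{1-(\Im z)^2}$ of the two one-sided decay scales --- correctly identifies the source of the mismatch between the two bounds.
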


Although the estimates give a rather good description of 
the qualitative shape of the pseudospectra,
the constants and dependence on 
$\dist(z,\sigma(H)) = 1-|\Im z|$ for $z\in\CS$
are presumably not sharp.

In view of Theorem~\ref{thmEstResIntro}, 
$H$~represents another example of a $\mathcal{PT}$-symmetric operator
with non-trivial pseudospectra.
The present study can be thus considered as a natural continuation
of the recent works \cite{SK,Henry_2014,KSTV}.
However, let us stress that the complex perturbation 
in the present model is bounded.
Moreover, comparing the present setting with
the situation when~\eqref{operator}
is subject to an extra Dirichlet condition at zero 
(\cf~Section~\ref{Sec.Dirichlet}),
the difference between these two realisations 
is indeed seen on the pseudospectral level only.

Even though the step-like shape of the potential in~\eqref{operator}
is a feature of the present study, we stress that 
the discontinuity by itself is not the source of 
the non-trivial pseudospectra,
see Remark~\ref{RemSingularity} below.

The pseudospectrum of~$H$ computed numerically using
Eigtool~\cite{Wri02a} by Mark Embree
is presented in Figure~\ref{fig.Embree}.

\begin{figure}[h]\begin{center}
\includegraphics[width=1\textwidth]{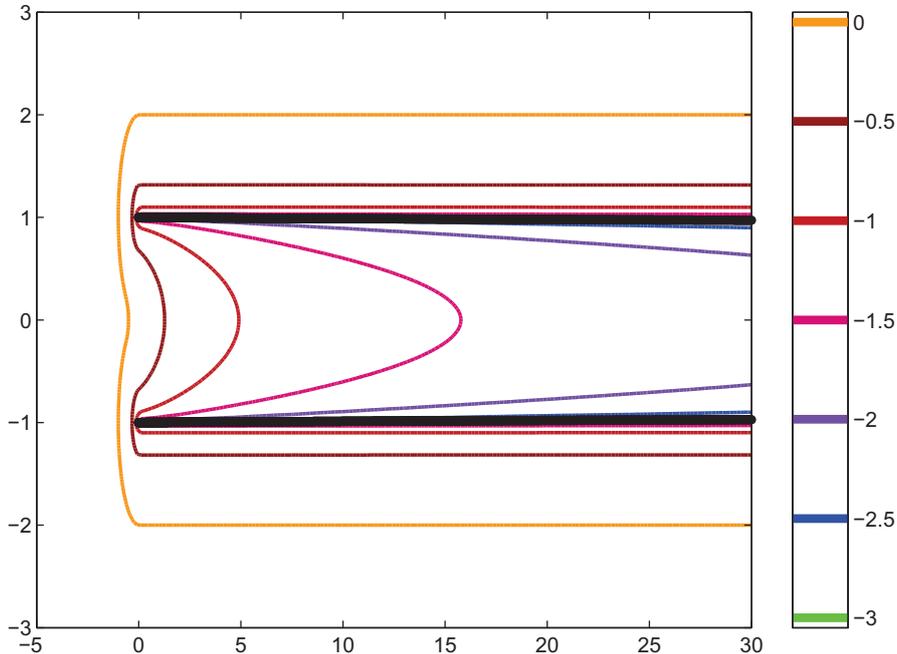}
\caption{The curves $\|(H-z)^{-1}\|=\eps^{-1}$ in the complex
$z$-plane computed for several values of~$\eps$;
the different colours correspond to $\log_{10}\eps$,
while the thick black lines are the essential spectrum of~$H$.
\emph{(Courtesy of Mark Embree.)}}\label{fig.Embree}
\end{center}
\end{figure}

\subsection{Weak coupling}
Inspired by~\eqref{instability}, 
we eventually consider the perturbed operator
\begin{equation}\label{operator.weak}
  H_\varepsilon := H \,\dot{+}\, \varepsilon V
\end{equation}
in the limit as $\eps \to 0$.
Here~$V$ is the operator of multiplication 
by a function $V\in L^1(\mathbb{R})$
that we denote by the same letter.
Since~$V$ is not necessarily relatively bounded with respect to~$H$,
the dotted sum in~\eqref{operator.weak} is understood
in the sense of forms.
We remark that the perturbation does not change
the essential spectrum, \ie,
$
  \sigma_\mathrm{ess}(H_\eps) = \sigma_\mathrm{ess}(H)
$,
and recall Proposition~\ref{propSpectrum}.

If~$H$ were the free Hamiltonian~$-\Delta$ 
and~$V$ were real-valued, the problem~\eqref{operator.weak} with $\eps \to 0$
is known as the regime of \emph{weak coupling} in quantum mechanics. 
In that case, it is well known 
that (under some extra assumptions on~$V$)
the perturbed operator
$-\Delta \,\dot{+}\, \varepsilon V$
possesses a unique discrete eigenvalue for all small positive~$\eps$ 
if, and only if, the integral of~$V$ is non-positive
(see~\cite{Si} for the original work).
This robust existence of ``weakly coupled bound states''
is of course related to the singularity of the resolvent kernel
of the free Hamiltonian at the bottom of the essential spectrum.
Indeed, these bound states do not exist in three and higher dimensions,
which is in turn related to the validity of the Hardy inequality
for the free Hamiltonian (see, \eg, \cite{Weidl_1999a}).

Complex-valued perturbations of the free Hamiltonian
have been intensively studied in recent years
\cite{Abramov-Aslanyan-Davies_2001,Frank-Laptev-Lieb-Seiringer_2006,
Bruneau-Ouhabaz_2008,Laptev-Safronov_2009,Demuth-Hansmann-Katriel_2009,Frank_2011,
Demuth-Hansmann-Katriel_2013}.
In \cite{BK,Novak} the authors consider perturbations of an operator
which is by itself non-self-adjoint.
In all of these papers, however, 
the results are inherited from properties
of the resolvent of the free Hamiltonian.

In the present setting, the unperturbed operator~$H$ is non-self-adjoint.
Moreover, its resolvent kernel has no local singularity,
but it blows up as $|z|\rightarrow+\infty$ when $|\Im z|<1$,
see Section~\ref{sResSpec}.
Consequently, discrete eigenvalues of~$H_\eps$ 
can only ``emerge from the infinity'', 
but not from any finite point of~\eqref{spectrum}.
The statement is made precise by virtue of the following result.
\begin{theorem}\label{thmSpecBound1}
Let $V\in L^1\big(\mathbb{R},(1+x^2)\,dx\big)$.
There exists a positive constant~$C$ (independent of~$V$ and~$\eps$)
such that, whenever
\[
 \varepsilon \, \big\|(1+|\cdot|^2)V\|_{L^1(\mathbb{R})}\leq \frac{1}{C}
  \,,
\]
we have
\beq\label{SpecBound1}
  \sigma_\mathrm{p}(H_\eps)
  \subset
  \overline{\CS}\cap\left\{\Re z\geq\frac{C}{\,\eps^2\,\|V\|_{L^1(\Real)}^2}\right\}
  \,.
\eeq
\end{theorem}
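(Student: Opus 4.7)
The plan is to use the Birman-Schwinger principle. Factor $V = V_1 V_2$ with $V_1 := |V|^{1/2}$ and $V_2 := \sgn(V)|V|^{1/2}$, and observe that for $z$ in the resolvent set of $H$, the operator $H_\eps - z$ has a nontrivial kernel only if $-1/\eps$ is an eigenvalue of the Birman-Schwinger operator $K_\eps(z) := V_1(H-z)^{-1}V_2$. It therefore suffices to show $\|K_\eps(z)\| < 1/\eps$ for every $z \in \overline{\CS} \setminus \sigma(H)$ with $\Re z$ below the stated threshold; eigenvalues with $z \notin \overline{\CS}$ are excluded by the simpler bound $\|(H-z)^{-1}\| \leq 1/\dist(z,\overline{\CS})$ together with the same Hilbert-Schmidt estimate below.

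The operator norm of $K_\eps(z)$ will be controlled via its Hilbert-Schmidt norm,
\[
\|K_\eps(z)\|^2 \;\leq\; \int_{\Real} \int_{\Real} |V(x)|\,|V(y)|\,|G_z(x,y)|^2 \, dx \, dy,
\]
where $G_z$ is the integral kernel of $(H-z)^{-1}$. From the explicit construction carried out earlier in the paper, $G_z$ is built from decaying solutions of $H\psi = z\psi$ divided by the Wronskian $i(k_+ + k_-)$, where $k_\pm := \sqrt{z \mp i}$ are the branches with $\Im k_\pm > 0$. The crucial observation is that for $z \in \CS$ this branch choice forces $k_+$ into the second quadrant and $k_-$ into the first, so that $k_+ + k_- \sim i/\sqrt{\Re z}$ for $\Re z \to \infty$; hence the prefactor $1/|k_+ + k_-|$ is of order $\sqrt{\Re z}$. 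The pointwise estimate to establish then takes the form
\[
|G_z(x,y)|^2 \;\leq\; c_1\,(1 + \Re z) \,+\, c_2\,(1 + x^2)(1 + y^2),
\]
uniformly for $z \in \overline{\CS}$ and $x, y \in \Real$, where the polynomial remainder absorbs the loss from those pieces of $G_z$ whose exponential decay rate $\Im k_\pm$ degenerates as $|\Im z| \to 1$ or $\Re z \to \infty$.

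Inserting this kernel estimate into the Hilbert-Schmidt inequality yields
\[
\|K_\eps(z)\|^2 \;\leq\; c_1(1+\Re z)\,\|V\|_{L^1(\Real)}^2 \,+\, c_2\,\|(1+|\cdot|^2)V\|_{L^1(\Real)}^2.
\]
Choosing the constant $C$ in the theorem large enough (depending only on $c_1, c_2$), the hypothesis $\eps\,\|(1+|\cdot|^2)V\|_{L^1} \leq 1/C$ forces the second contribution to be at most $1/(2\eps^2)$, while $\Re z < C/(\eps^2\|V\|_{L^1}^2)$ (after absorbing the harmless additive $1$ into $C$) keeps the first contribution likewise below $1/(2\eps^2)$. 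Summing gives $\eps\|K_\eps(z)\| < 1$, which by the Birman-Schwinger principle rules $z$ out as an eigenvalue of $H_\eps$, proving \eqref{SpecBound1}. The main obstacle is establishing the displayed pointwise bound on $G_z$: one needs to track the square-root branches throughout $\overline{\CS}$, treat separately the four cases according to the signs of $x$ and $y$ coming from the discontinuity of the potential at $0$, and verify that the polynomial weight $(1+x^2)(1+y^2)$ really is sufficient to dominate the contribution of the slowly-decaying exponentials near the essential spectrum.
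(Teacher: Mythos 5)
Your overall plan is the paper's: Birman-Schwinger reduction to $K_z=|V|^{1/2}(H-z)^{-1}V_{1/2}$, a Hilbert-Schmidt norm bound, and a pointwise kernel estimate that combines blow-up of order $\Re z$ inside the strip with a $(1+x^2)(1+y^2)$ weight near $\pm i$. The paper organizes this slightly differently, covering $\Com$ by four regions (disks $D_\pm$ around $\pm i$, an exterior region $U$, and the remainder $W$ of the half-strip) and taking the \emph{maximum} of the four resulting Hilbert-Schmidt bounds, rather than a single additive pointwise inequality, but the content is the same; the bound you flag as the main obstacle is indeed the one the paper proves (the crucial device near $\pm i$ being $|e^{-\omega}-1|\leq|\omega|$ for $\Re\omega\geq 0$, which gives $|\mathcal{R}_z(x,y)|\leq c_0+|x|+|y|$ there).

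Two cautions on the last step. First, ``choosing $C$ large enough'' is not quite right: the hypothesis does force the second contribution below $1/(2\eps^2)$ only if $C\geq\sqrt{2c_2}$, but the first contribution $c_1(1+\Re z)\|V\|_{L^1}^2$ with $\Re z<C/(\eps^2\|V\|_{L^1}^2)$ is of size $c_1C/\eps^2$, which is $\leq 1/(2\eps^2)$ only if $C\lesssim 1/c_1$ --- so this constraint wants $C$ \emph{not} too large. For generic $c_1,c_2$ these two requirements may be incompatible, and then the stated conclusion $\Re z\geq C/(\eps^2\|V\|_{L^1}^2)$ does not follow from your additive estimate. The paper's $\max$ form decouples the two: the smallness hypothesis on $\eps\|(1+|\cdot|^2)V\|_{L^1}$ kills one branch of the max outright, and the condition on $\Re z$ kills the other, with no trade-off between the constants. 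You should either reproduce that decoupled structure (prove the two Hilbert-Schmidt bounds on disjoint regions rather than summing pointwise bounds) or weaken the conclusion to $\Re z\geq 1/(C\eps^2\|V\|_{L^1}^2)$ so that both requirements prefer $C$ large. Second, the numerical-range bound $\|(H-z)^{-1}\|\leq 1/\dist(z,\overline{\CS})$ does not on its own control $\|K_z\|_{\mathrm{HS}}$ for $z$ just outside $\overline{\CS}$ (where the distance can be arbitrarily small and $V$ need not be bounded); the kernel bound must be verified there too, which is exactly the role of the paper's regions $U$ and $D_\pm$.
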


It is interesting to compare this estimate on the location 
of possible eigenvalues of~$H_\eps$
with the celebrated result of~\cite{Abramov-Aslanyan-Davies_2001}
\beq\label{DaviesBound}
  \sigma_\mathrm{p}(-\Delta \,\dot{+}\, \varepsilon V)
  \subset
  \left\{|z|\leq \frac{\,\eps^2\,\|V\|_{L^1(\Real)}^2}{4}\right\}
  \,.
\eeq
Our bound~\eqref{SpecBound1} can be indeed read
as an inverse of~\eqref{DaviesBound}.
It demonstrates how much the present situation differs
from the study of weakly coupled eigenvalues of the free Hamiltonian.

Under some additional assumptions on~$V$,
the claim of Theorem~\ref{thmSpecBound1} can be improved 
in the following way.
\begin{theorem}\label{thmSpecBound2}
Let $n\geq2$ and 
$
  V\in L^1\big(\mathbb{R},(1+x^{2n})\,dx\big)
  \cap W^{1,1}(\mathbb{R})
$.
There exist positive constants~$\varepsilon_0$ and~$C$ such that, 
for all $\varepsilon\in(0,\varepsilon_0)$,
we have
\beq\label{SpecBound2}
  \sigma_\mathrm{p}(H_\eps)
  \subset\overline{\CS}\cap\left\{\Re z \geq 
  \frac{C}{\,\varepsilon^{2n}\,}\right\}
  \,.
\eeq
\end{theorem}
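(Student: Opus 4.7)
The plan is to iterate the Birman--Schwinger argument used in Theorem~\ref{thmSpecBound1}. Factorise $V = v_{1}v_{2}$ with $v_{1} := (\sgn V)\,|V|^{1/2}$ and $v_{2} := |V|^{1/2}$, and introduce
\[
K(z) \;:=\; v_{2}\,(H-z)^{-1}\,v_{1}, \qquad z\not\in\sigma(H).
\]
By the Birman--Schwinger principle, $z \in \sigma_{\mathrm p}(H_{\varepsilon})$ forces $-1/\varepsilon \in \sigma(K(z))$, hence the spectral radius of $\varepsilon K(z)$ is at least $1$, and in particular $\varepsilon^{n}\,\|K(z)^{n}\|\geq 1$ for every integer $n\geq 1$. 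The gain with respect to Theorem~\ref{thmSpecBound1} comes from working with $K(z)^{n}$ rather than $K(z)$ itself, since each extra iterate introduces one additional internal factor of $V$ in the kernel, which can be absorbed by Fourier decay of~$\widehat{V}$; this is how the exponent $2n$ in $\Re z \geq C\varepsilon^{-2n}$ is produced.

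The essential input is the asymptotic structure of the resolvent kernel $G_{z}$ constructed in Section~\ref{sResSpec}. With $k_{\pm}^{2}=z\mp i$ and $\Im k_{\pm}>0$, for $z=r+is\in\CS$ with $r$ large we have $k_{+}=-\sqrt{r}+\mathcal{O}(r^{-1/2})$, $k_{-}=\sqrt{r}+\mathcal{O}(r^{-1/2})$ and $i(k_{+}+k_{-})=-r^{-1/2}(1+\mathcal{O}(r^{-1}))$. A case-by-case inspection of the explicit formula reveals the uniform leading behaviour
\[
G_{z}(x,y) \;=\; \sqrt{r}\,e^{-i\sqrt{r}\,(x+y)}\Bigl(1+\mathcal{O}\bigl(r^{-1/2}(1+|x|+|y|)\bigr)\Bigr)+\mathcal{O}\bigl(r^{-1/2}\bigr),
\]
so that $G_{z}$ is, to leading order, a rank-one kernel.

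The $n$-fold iterate has the integral representation
\[
K(z)^{n}(x_{0},x_{n})=\varepsilon^{n}\,v_{2}(x_{0})\int_{\mathbb{R}^{n-1}}\prod_{j=0}^{n-1}G_{z}(x_{j},x_{j+1})\prod_{j=1}^{n-1}V(x_{j})\,dx_{1}\cdots dx_{n-1}\;v_{1}(x_{n}).
\]
Substituting the leading rank-one term into each factor $G_{z}$, the phase $\sum_{j=0}^{n-1}(x_{j}+x_{j+1})=x_{0}+2(x_{1}+\cdots+x_{n-1})+x_{n}$ splits, and the $n-1$ internal integrations produce $(n-1)$ copies of $\widehat{V}(2\sqrt{r})=\int_{\mathbb{R}}V(x)\,e^{-2i\sqrt{r}x}\,dx$. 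Since $V\in W^{1,1}(\mathbb{R})$, integration by parts yields $|\widehat{V}(2\sqrt{r})|\leq\|V'\|_{L^{1}}/(2\sqrt{r})$. Combining the $n$ amplitudes $\sqrt{r}$, the $n-1$ factors $\mathcal{O}(r^{-1/2})$ from $\widehat V$, and the $\|V\|_{L^{1}}$ coming from the boundary weights $v_{1},v_{2}$, the dominant contribution to $\|K(z)^{n}\|$ is of order $C_{n}\,\varepsilon^{n}\,r^{1/2}$. The subdominant contributions, obtained by Taylor-expanding the exponentials $e^{ik_{\pm}\,\cdot}$ to higher order in $1/\sqrt{r}$, come with polynomial prefactors in the integration variables, and each power $|x_{j}|^{\ell}$ is absorbed by an integral $\int|V(x_{j})||x_{j}|^{\ell}\,dx_{j}$; the hypothesis $V\in L^{1}(\mathbb{R},(1+x^{2n})\,dx)$ is precisely what ensures that all such moments arising in the $n$-fold iterate are finite.

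Taking $n$-th roots gives $\|K(z)^{n}\|^{1/n}\leq C_{n}(V)\,r^{1/(2n)}$ for $r$ large enough, and the Birman--Schwinger condition $\varepsilon\,\|K(z)^{n}\|^{1/n}<1$ then forces $r<C_{n}(V)^{-2n}\,\varepsilon^{-2n}$, which yields the desired eigenvalue-free region after choosing $\varepsilon_{0}$ small enough that the whole expansion is valid. The main obstacle is the combinatorial bookkeeping: the number of products of ``leading'', ``transmission'', and ``Taylor-remainder'' pieces of the $n$ factors $G_{z}$ grows exponentially in $n$, and one has to verify that only the purely leading combination saturates the announced rate while all others are strictly of lower order thanks to the moments and the $W^{1,1}$-regularity of~$V$.
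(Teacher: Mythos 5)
You take a genuinely different — and valid — route from the paper, built on the same analytic backbone. Both arguments hinge on the Birman--Schwinger principle, on the asymptotic form of the resolvent kernel $\mathcal{R}_z(x,y)\approx\sqrt{\Re z}\,e^{-i\sqrt{\Re z}(x+y)}$ with a polynomially controlled remainder, and on the fact that $V\in W^{1,1}$ gives $|\widehat V(2\sqrt{\Re z})|=\mathcal{O}((\Re z)^{-1/2})$ while the moment hypothesis controls the remainder. The difference is in the algebra: the paper writes $K_z=L_z+M_z$ with $L_z$ rank one and $\|M_z\|$ uniformly bounded (Lemma~\ref{lemDecompK}), factors $\varepsilon K_z+1=(1+\varepsilon M_z)\big[\varepsilon(1+\varepsilon M_z)^{-1}L_z+1\big]$, reduces the eigenvalue condition to a single scalar equation in $z$, and expands it as a Neumann series in $\varepsilon M_z$; this yields $1/\sqrt{\Re z}=\mathcal{O}(\varepsilon^{n+1})$, in fact a touch more than stated. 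You instead use the spectral-radius formula to pass from $-1/\varepsilon\in\sigma(K_z)$ to $\|K_z^n\|\geq\varepsilon^{-n}$ and bound $\|K_z^n\|\lesssim\sqrt{\Re z}$ by iterating the kernel directly, which gives exactly $\Re z\gtrsim\varepsilon^{-2n}$. Your route avoids the algebraic reduction, at the cost of having to control all $2^n$ ways to split each resolvent factor into its leading and error parts.

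The one point in your outline that deserves to be made explicit, because it is what makes the exponent come out right: once you expand the $n$ resolvent factors, the first-order error in $\mathcal{R}_z$ carries the prefactor $\tfrac12[\Im z\,(x+y)-(|x|+|y|)]$, and it is essential that this prefactor \emph{splits} as $(\Im z\,x-|x|)+(\Im z\,y-|y|)$. This guarantees that after expanding the product of prefactors into monomials in single variables, each error factor removes at most \emph{one} internal integration variable from the Fourier-decay pool $\widehat V(2\sqrt{\Re z})=\mathcal{O}((\Re z)^{-1/2})$, rather than the naive two. Without this observation a rough count (each error factor blocking both of its variables) yields $\|K_z^n\|=\mathcal{O}(\Re z)$ for $n\geq4$ and hence only $\Re z\gtrsim\varepsilon^{-n}$, which is weaker than claimed. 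The paper performs precisely this splitting in passing from~\eqref{exprIj} to~\eqref{exprIFourier}, and after that step your argument and the paper's need the same case analysis. Two tiny slips of exposition: $K(z)^n$ should carry no $\varepsilon^n$ inside the integral (your definition of $K(z)$ has none), and the constant that appears after taking $n$-th roots is $C_n^{-2}$, not $C_n^{-2n}$; neither affects the conclusion.
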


In particular, if for instance $V$ belongs to 
the Schwartz space $\mathscr{S}(\mathbb{R})$,
then every eigenvalue $\lambda(\varepsilon)$ of~$H_\eps$
must ``escape to infinity'' faster than any power of~$\eps^{-1}$ 
as $\eps \to 0$, namely
$
  |\lambda(\varepsilon)|^{-1} = \mathcal{O}(\varepsilon^\infty)
$.

\begin{Remark}
The reader will notice that statement~\eqref{SpecBound1}
differs from~\eqref{SpecBound2} in that the latter
does not highlight the dependence of the right hand side
on the potential~$V$ but only on its amplitude~$\varepsilon$.
The reason is that it is the behaviour of~$H_\eps$ 
on diminishing~$\varepsilon$ that primarily interests us.
Moreover, the proofs of the theorems are different 
and it would be cumbersome (but doable in principle)
to gather the dependence of the right hand side in~\eqref{SpecBound2} 
on (different) norms of~$V$.
\end{Remark}

\subsection{The content of the paper}
The organisation of this paper is as follows.

In Section~\ref{sResSpec}, 
we find the integral kernel of the resolvent $(H-z)^{-1}$,
\cf~Proposition~\ref{propIntegral},
and use it to prove Proposition~\ref{propSpectrum}. 

In Section~\ref{sPseudo},
the explicit formula of the resolvent kernel is further exploited 
in order to prove Theorem~\ref{thmEstResIntro}.

The definition of the perturbed operator~\eqref{operator.weak} 
and its general properties are established in Section~\ref{sPert}.
In particular, we locate its essential spectrum 
(Proposition~\ref{propStabEss})
and prove the Birman-Schwinger principle 
(Theorem~\ref{thmBirmanSchwinger}).

Section~\ref{sBS} is divided into two respective subsections,
in which we prove Theorems~\ref{thmSpecBound1} and~\ref{thmSpecBound2}
with help of the Birman-Schwinger principle
and, again, using the explicit formula of the resolvent kernel.

Finally, in Section~\ref{Sec.Ex}, we present two concrete examples
of the perturbed operator~\eqref{operator.weak}.
Moreover, we make a comparison of the present study
with a decoupled model due to an extra Dirichlet condition.

\section{The resolvent and spectrum}\label{sResSpec}
%
Our goal in this section is to obtain 
an integral representation of the resolvent of~$H$.
Using that result, we give a proof of Proposition~\ref{propSpectrum}.

In the following, we set 
$$
  k_+(z) := \sqrt{i-z} 
  \qquad\mbox{and}\qquad 
  k_-(z) := \sqrt{-i-z}
  \,,
$$ 
where we choose the principal value of the square root,
\ie, $z \mapsto \sqrt{z}$
is holomorphic on $\mathbb{C}\setminus(-\infty,0]$ and positive on $(0,+\infty)$.

\begin{proposition}\label{propIntegral}
For all $z\notin\mathbb{R}_++i\,\{-1,1\}\,$, 
$H-z$ is invertible and, for every $f\in L^2(\mathbb{R})\,$,
\beq\label{integralResolvent}
  [(H-z)^{-1}f](x) = \int_\mathbb{R} \mathcal{R}_z(x,y) \, f(y) \, dy
  \,,
\eeq
where
\beq\label{exprKernel}
\mathcal{R}_z(x,y) := \left\{
\begin{aligned}
  &\frac{1}{k_+(z)+k_-(z)} \, e^{-k_\pm(z)|x|-k_\mp(z)|y|}
  \,,
  &\pm x\geq0\,, \, \pm y\leq0
  \,, 
  \\
  &\frac{1}{2k_\pm(z)} \, e^{-k_\pm(z)|x-y|} 
  &
  \\
  & \quad \pm \frac{k_+(z)-k_-(z)}{2k_\pm(z)\big(k_+(z)+k_-(z)\big)}
  \, e^{-k_\pm(z)|x+y|}
  \,, 
  &\pm x\geq0\,, \, \pm y\geq0
  \,.
\end{aligned}
\right.
\eeq
\end{proposition}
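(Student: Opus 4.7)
The plan is to construct the resolvent kernel explicitly by a variation-of-constants argument applied to the piecewise-constant Schrödinger equation $(H-z)u=f$: one builds two Jost-type solutions of the homogeneous equation that decay at $\pm\infty$, glues them together $C^1$ at the origin, and forms the standard Green's function.

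First I would observe that for $z\notin\mathbb{R}_+ + i\{-1,1\}$ neither $i-z$ nor $-i-z$ lies in $(-\infty,0]$, so $k_\pm(z)$ are well defined by the principal branch of the square root and satisfy $\Re k_\pm(z)>0$. This positivity of the real parts is the analytic input that guarantees the relevant homogeneous solutions decay at the correct infinity and that the kernel is integrable.

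Next I would build $\psi_+$ as the solution of $(H-z)\psi=0$ that equals $e^{-k_+(z)x}$ on $x\ge 0$ and is extended to $x\le 0$ by the unique linear combination of $e^{\pm k_-(z)x}$ making $\psi_+$ and $\psi_+'$ continuous at $0$ (the $C^1$-matching is forced by $\Dom(H)=W^{2,2}(\mathbb{R})$); a symmetric construction produces $\psi_-$. Because the two functions solve an autonomous second-order ODE on each half-line with $C^1$ matching at the junction, their Wronskian is constant on $\mathbb{R}$, and evaluating at $x=0$ gives $W(\psi_+,\psi_-)=k_+(z)+k_-(z)$. The resolvent kernel is then the usual Sturm--Liouville Green's function
\[
 \mathcal{R}_z(x,y)=\frac{\psi_+(\max(x,y))\,\psi_-(\min(x,y))}{k_+(z)+k_-(z)}.
\]
Inserting the explicit expressions for $\psi_\pm$ and splitting according to the four sign patterns of $(x,y)$ reproduces~\eqref{exprKernel}. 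This bookkeeping, together with recognising the cancellations that collapse the coefficients on the ``wrong'' half-line into the compact form $(k_+-k_-)/\bigl(2k_\pm(k_++k_-)\bigr)$, is the only calculational obstacle.

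It remains to verify that the operator $R_z$ with this kernel really coincides with $(H-z)^{-1}$. Since $\Re k_\pm(z)>0$, the kernel decays exponentially in $|x-y|$ and in $|x|+|y|$, so Schur's test yields boundedness of $R_z$ on $\sii(\mathbb{R})$. By construction, for every $f\in\sii(\mathbb{R})$ the function $u:=R_zf$ lies in $W^{2,2}(\mathbb{R})=\Dom(H)$ and satisfies $(H-z)u=f$ distributionally. Injectivity of $H-z$ follows from the same Jost analysis: any $u\in\Dom(H)$ in its kernel must be a linear combination of $e^{\pm k_\pm(z)x}$ on each half-line, and $\sii$-integrability at both infinities combined with $C^1$-matching at zero forces $u\equiv 0$. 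Together these facts identify $R_z$ with $(H-z)^{-1}$, establishing both invertibility and the integral representation~\eqref{integralResolvent}.
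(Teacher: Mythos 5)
Your proposal is correct and is essentially the same construction as in the paper, just packaged more conceptually: you build the two Jost-type decaying solutions $\psi_\pm$, compute the Wronskian $W(\psi_+,\psi_-)=k_+(z)+k_-(z)$, and write down the Sturm--Liouville Green's function $\psi_+(\max)\psi_-(\min)/W$, whereas the paper performs variation of parameters directly on the inhomogeneous equation on each half-line, introduces free constants $A_\pm,B_\pm$, and then fixes them by $C^1$-matching at $0$ and decay at $\pm\infty$. The two routes produce the same kernel by the same mechanism; yours avoids the bookkeeping with four undetermined constants at the cost of having to unpack the matched extensions of $\psi_\pm$ across $x=0$ when verifying the explicit formula~\eqref{exprKernel}. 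One small point in your favour: you explicitly supply the injectivity argument (no nontrivial $\sii$-solution of the homogeneous equation), which the paper leaves implicit after constructing a bounded right inverse; this is a worthwhile addition since m-sectoriality alone does not rule out spectrum inside the half-strip. Your identification $\Re k_\pm(z)>0$ for $z\notin\mathbb{R}_+ + i\{-1,1\}$, which drives both the decay of $\psi_\pm$ and the Schur-test bound, is the same analytic input the paper relies on.
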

\begin{Remark} 
The kernel $\mathcal{R}_z(x,y)$ is clearly bounded 
for every $(x,y) \in \Real^2$ and fixed $z \not= \pm i$.
Moreover, using~(\ref{expansion}) below, 
it can be shown that it remains bounded for $z = \pm i$ as well.
Hence, contrary to the case of the resolvent kernel 
of the free Hamiltonian~$-\Delta$ in one or two dimensions,
the resolvent kernel of~$H$ has \emph{no local singularity}.
On the other hand, and again contrary to the case of the Laplacian,
for all fixed $(x,y)\in\mathbb{R}^2$,
$|\mathcal{R}_z(x,y)|\longrightarrow+\infty$ 
as $\Re z\rightarrow+\infty$, $z\in\CS$.
Hence, the kernel exhibits a \emph{blow-up at infinity}. 
The absence of singularity will play a fundamental role in the analysis
of weakly coupled eigenvalues in Section~\ref{sBS}. 
Moreover, we shall see in Section~\ref{sPseudo} that the singular behaviour 
at infinity is responsible for the spectral instability of~$H$.
\end{Remark}
\begin{proof}[Proof of Proposition~\ref{propIntegral}]
Let $z\notin [0,\infty) +i\{-1,1\}$ and $f\in L^2(\mathbb{R})$. 
We look for the solution of the resolvent equation $(H-z)u=f$.

The general solutions~$u_\pm$ of the individual equations
\beq\label{eqpm}
  -u''+(\pm i-z)u -f = 0
  \qquad \mbox{in} \qquad
  \mathbb{R}_\pm
  \,,
\eeq
where $\mathbb{R}_+ := [0,+\infty)$ and $\mathbb{R}_-:=(-\infty,0]$,
are given by
\[
  u_\pm(x) = \alpha_\pm(x) \, e^{k_\pm(z)x} + \beta_\pm(x) \, e^{-k_\pm(z)x} 
  \,,
\]
where $\alpha_\pm$, $\beta_\pm$ are functions to be yet determined. 
Variation of parameters leads to the following system:
\[
 \left\{
 \begin{array}{lll} \alpha_\pm'(x)e^{k_\pm(z)x}  
  +  \beta_\pm'(x)e^{-k_\pm(z)x} & = & 0\,, 
  \\
  k_\pm(z)\alpha_\pm'(x)e^{k_\pm(z)x}  
  - k_\pm(z)\beta_\pm'(x)e^{-k_\pm(z)x} & = & -f\,.
 \end{array}
\right.
\]
Hence, we can choose
\begin{align*}
 \alpha_\pm(x) &= -\frac{1}{2k_\pm(z)}\int_0^xf(y)\,e^{-k_\pm(z)y}dy + A_\pm
  \,,& \pm x>0\,,&\\
 \beta_\pm(x) &= \frac{1}{2k_\pm(z)}\int_0^xf(y)\,e^{k_\pm(z)y}dy+B_\pm
  \,, &\pm x>0\,,&
\end{align*}
where $A_\pm,B_\pm$ are arbitray complex constants.
The desired general solutions of~(\ref{eqpm}) are then given by
\beq\label{exprSol}
  u_\pm(x) = \frac{-1}{k_\pm(z)}\int_0^x f(y) \,
  \sinh\big(k_\pm(z)(x-y)\big) \, dy 
  + A_\pm e^{k_\pm(z)x}+B_\pm e^{-k_\pm(z)x}
  \,,
\eeq
with $(A_+,A_-,B_+,B_-)\in\mathbb{C}^4\,$.

Among these solutions, 
we are interested in those which satisfy the regularity conditions
\beq
  u_+(0) = u_-(0)
  \,, \qquad
  u_+'(0) = u_-'(0)
  \,.
\eeq
These conditions are equivalent to the system
\[
 \left\{
 \begin{array}{ccc}
  A_+ + B_+ & = & A_-+ B_-\,,\\
  k_+(z)A_+-k_+(z)B_+ & = & k_-(z)A_- - k_-(z)B_-\,,
 \end{array}
\right.
\]
whence we obtain the following relations:
\beq\label{systemCond}
\left\{
\begin{array}{ccc}
 2A_+ & = & \big(k_+(z)+k_-(z)\big)A_-+ \big(k_+(z)-k_-(z)\big)B_-\,,\\
 2B_+ & = & \big(k_+(z)-k_-(z)\big)A_- + \big(k_+(z)+k_-(z)\big)B_-\,.
\end{array}
\right.
\eeq

Summing up, assuming~\eqref{systemCond}, the function
\beq\label{u.cases}
  u(x) :=
  \begin{cases}
    u_+(x) & \mbox{if} \quad x \geq 0 \,,
    \\
    u_-(x) & \mbox{if} \quad x \leq 0 \,,
  \end{cases}  
\eeq
belongs to $W_\mathrm{loc}^{2,2}(\Real)$
and solves the differential equation~\eqref{eqpm} 
in the whole~$\Real$.
It remains to check some decay conditions as $x\rightarrow\pm\infty$ 
in addition to~(\ref{systemCond}).
This can be done by setting
\begin{align}
  A_+ &:= \frac{1}{2k_+(z)}\int_0^{+\infty}f(y)\,e^{-k_+(z)y}dy
  \,, \label{A+}
  \\
  B_- &:= \frac{1}{2k_-(z)}\int_{-\infty}^0f(y)\,e^{k_-(z)y}dy
  \,. \label{B-}
\end{align}
Indeed, then
\begin{eqnarray*}
 u_+(x) & = & -\frac{1}{2k_+(z)}\,e^{k_+(z)x}\int_x^{+\infty}f(y)\,e^{-k_+(z)y}dy
 \\
 &&+ e^{-k_+(z)x}\left(\frac{1}{2k_+(z)}\int_0^xf(y)\,e^{k_+(z)y}dy + B_+\right)
\end{eqnarray*}
goes to $0$ as $x\rightarrow+\infty$, and similarly for~$u_-$. 

By gathering relations (\ref{systemCond}), (\ref{A+}) and (\ref{B-}), 
we obtain the following values for~$A_-$ and~$B_+$:
\begin{align}
  A_- =\ & \frac{1}{k_+(z)+k_-(z)}
  \int_0^{+\infty}f(y)\,e^{-k_+(z)y}dy
  \nonumber
  \\
  & -\frac{k_+(z)-k_-(z)}{2k_- (z)\big(k_+(z)+k_-(z)\big)}
  \int_{-\infty}^0f(y)\,e^{k_-(z)y}dy
  \,,
  \label{A-}
  \\
  B_+ =\ & \frac{k_+(z)-k_-(z)}{2k_+(z)\big(k_+(z)+k_-(z)\big)}
  \int_0^{+\infty}f(y)\,e^{-k_+(z)y}dy 
  \nonumber
  \\
  &+ \frac{1}{k_+(z)+k_-(z)}
  \int_{-\infty}^0f(y)\,e^{k_-(z)y}dy
  \,.
  \label{B+}
\end{align}
Replacing the constants $A_+,A_-,B_+,B_-$ 
by their values (\ref{A+}), (\ref{A-}), (\ref{B+}) and (\ref{B-}), respectively,
expression~\eqref{u.cases} with~(\ref{exprSol}) gives 
the desired integral representation 
\begin{equation}\label{preSchur}
 u(x) = \int_\mathbb{R} \mathcal{R}_z(x,y) \, f(y) \, dy
\end{equation}
for a decaying solution of the differential equation~\eqref{eqpm} in~$\Real$.

To complete the proof, it remains to check that~$u$ given by~\eqref{preSchur}
is indeed in the operator domain $\Dom(H) = W^{2,2}(\mathbb{R})$. 
Using for instance the Schur test (\cf~\eqref{Schur1} below), 
it is straightforward to check that~$u$ is in $L^2(\mathbb{R})$ 
provided that $f\in L^2(\mathbb{R})$. 
Therefore $u'' = (i\,\textrm{sign~}x-z)u-f \in L^2(\mathbb{R})\,$, 
whence $u\in W^{2,2}(\mathbb{R})$ and $u = (H-z)^{-1}f$.
\end{proof}

This representation of the resolvent will be used 
in Sections~\ref{sPert} and~\ref{sBS} to study the location
of weakly coupled eigenvalues. 
It will also enable us to prove the existence of
non-trivial pseudospectra in Section~\ref{sPseudo}.
In this section we use it to prove 
Proposition~\ref{propSpectrum}.

\begin{proof}[Proof of Proposition~\ref{propSpectrum}]
According to Proposition \ref{propIntegral}, we have
\[
  \sigma(H)\subset \mathbb{R}_++i\,\{-1,+1\} 
  \,.
\]
It remains to prove the inverse inclusion. 
This can be achieved by a standard singular sequence construction.

Let $(a_j)_{j\geq1}$ be a real increasing sequence such that, 
for all $j\geq1$, $a_{j+1}-a_j>2j+1$. 
Let $\xi_j\in C_0^\infty(\mathbb{R})$ be such that 
$\Supp \xi_j\subset(a_j-j,a_j+j)$, 
$\xi_j(x) = 1$ for all $x\in[a_j-1,a_j+1]$, 
and 
\[
  \sup|\xi_j'|\leq \frac{C}{j}
  \,, \qquad 
  \sup|\xi_j''|\leq \frac{C}{j^2}
  \,, 
\]
for some $C>0\,$.

Then, for all $r\geq0$, the sequence
\[
  u_j^\pm(x) := C_j \, \xi_j(\pm x) \, e^{irx}
  \,,
\]
where $C_j$ is chosen so that $\|u_j^\pm\| = 1$, 
is a singular sequence for $H$ corresponding to $z = \pm i + r$ 
in the sense of \cite[Def.~IX.1.2]{Edmunds-Evans}. 
Hence, according to \cite[Thm.~IX.1.3]{Edmunds-Evans}, 
we have
\[
  \sigma(H) \supset
  \mathbb{R}_++i\,\{-1,+1\}
  \,.
\]
This completes the proof of the proposition.
\end{proof}
%

\section{Pseudospectral estimates}\label{sPseudo}
%
The main purpose of this section is to give a proof of Theorem~\ref{thmEstResIntro}.

\begin{proof}[Proof of Theorem~\ref{thmEstResIntro}]
Let $z = \tau+i\delta\,$, where $\tau>0$ and $\delta\in(-1,1)$. 
Recall our convention for the square root 
we fixed at the beginning of Section~\ref{sResSpec}.
The following expansions hold
\begin{equation}\label{expansion}
\begin{aligned}
  k_+(z)
  &= \sqrt{i(1-\delta)-\tau} = i\sqrt{\tau-i(1-\delta)} 
  =  i\sqrt{\tau}+\frac{1-\delta}{2\sqrt{\tau}}
  +\mathcal{O}\left(\frac{1}{|\tau|^{3/2}}\right)
  ,
  \\
  k_-(z)
  &= \sqrt{i(-1-\delta)-\tau} = -i\sqrt{\tau+i(1+\delta)} 
  = -i\sqrt{\tau}+\frac{1+\delta}{2\sqrt{\tau}}
  +\mathcal{O}\left(\frac{1}{|\tau|^{3/2}}\right)
  ,
\end{aligned}
\end{equation}
as $\tau\rightarrow+\infty$.
As a consequence, we have the asymptotics
\begin{eqnarray}
|k_+(z)| \sim \sqrt{\tau}\,, 
&& |k_-(z)| \sim \sqrt{\tau}\,, 
\label{equivk}
\\
\Re k_+(z) \sim \frac{1-\delta}{2\sqrt{\tau}}\,, 
&&\Re k_-(z) \sim \frac{1+\delta}{2\sqrt{\tau}}\,, 
\label{equivRek}\\
|k_+(z) + k_-(z)| \sim \frac{1}{\sqrt{\tau}}\,,
&& |k_+(z)-k_-(z)| \sim 2\sqrt{\tau}\,, 
\label{equivk+k}
\end{eqnarray}
as $\tau\rightarrow+\infty$.

Let us prove the upper bound in~(\ref{EstResIntro}) using the Schur test:
\beq\label{Schur1}
 \|(H-z)^{-1}\|^2 \ \leq \
  \sup_{x\in\mathbb{R}}\int_{\Real}|\mathcal{R}_z(x,y)| \, dy
  \ \cdot \
  \sup_{y\in\mathbb{R}}\int_{\Real}|\mathcal{R}_z(x,y)| \,dx 
  \,.
\eeq
After noticing the symmetry relation
$\mathcal{R}_z(x,y) = \mathcal{R}_z(y,x)$ valid for all $(x,y)\in\mathbb{R}^2$
(which is a consequence of the $\mathcal{T}$-self-adjointness of~$H$),
we simply have
\beq\label{Schur}
  \|(H-z)^{-1}\| \ \leq \ \sup_{x\in\mathbb{R}}\int_{\Real}|\mathcal{R}_z(x,y)| \, dy
  \,.
\eeq
By virtue of~(\ref{exprKernel}), for all $x>0$, 
\begin{align}
  \int_{\Real}|\mathcal{R}_z(x,y)| \, dy  
  \ \leq \ & \frac{1}{|k_+(z)+k_-(z)|}
  \int_{-\infty}^0e^{-\Re k_+(z) \, x+\Re k_-(z) \,y} \, dy 
  \nonumber \\
  & + 
  \frac{1}{2|k_+(z)|}\int_0^{+\infty}e^{-\Re k_+|x-y|} \, dy 
  \nonumber \\
  & + \frac{|k_+(z)-k_-(z)|}{2|k_+(z)||k_+(z)+k_-(z)|}
  \int_0^{+\infty}e^{-\Re k_+(z)(x+y)} \,dy   
  \nonumber \\  
  \ \leq \ & 
  \frac{1}{\Re k_-(z)|k_+(z)+k_-(z)|} 
  + \frac{1}{2\Re k_+(z)|k_+(z)|} 
  \nonumber \\
  & + \frac{|k_+(z)-k_-(z)|}{2\Re k_+(z)|k_+(z)||k_+(z)+k_-(z)|}
  \,. 
  \label{schur+}
\end{align}
Similarly, if $x<0$,
\begin{align}
  \int_{\Real}|\mathcal{R}_z(x,y)| \, dy 
  \ \leq \ & \frac{1}{\Re k_+(z)|k_+(z)+k_-(z)|} + \frac{1}{2\Re k_-(z)|k_-(z)|} 
  \nonumber \\
  & + \frac{|k_+(z)-k_-(z)|}{2\Re k_-(z)|k_-(z)||k_+(z)+k_-(z)|}
  \,. 
  \label{schur-}
\end{align}
According to (\ref{equivk})--(\ref{equivk+k}), 
the right hand sides in~(\ref{schur+}) and~(\ref{schur-}) 
are both equivalent to
\[
  2\tau\big[(1+\delta)^{-1}+(1-\delta)^{-1}\big] \leq \frac{4\tau}{1-|\delta|}
  \,,
\]
whence~(\ref{Schur}) yields the upper bound in~(\ref{EstResIntro}).

In order to get the lower bound, 
we set 
\begin{equation}\label{pseudomode}
  f_0(x) := e^{-\barr{k_+(z)}\,x}\chi_{(0,\infty)}(x)
  \,,
\end{equation}
where~$\chi_\Sigma$ denotes the characteristic function of a set~$\Sigma$. 
Then according to (\ref{exprKernel}),
\begin{align}
  \|(H-z)^{-1}f_0\|^2 
  & \geq \int_{-\infty}^0\left|\frac{1}{k_+(z)+k_-(z)}
  \int_0^{+\infty}e^{k_-(z)\,x-2\Re k_+(z)\,y} \,dy\right|^2dx
  \\
  & = \frac{1}{|k_+(z)+k_-(z)|^2}
  \int_{-\infty}^0 \! e^{2\Re k_-(z) \, x} \, dx 
  \left(\int_0^{+\infty} \!\! e^{-2\Re k_+(z)\,y}\,dy\right)^{\!2}
  \\
  &= \frac{1}{\big(2\Re k_+(z)\big)^2 \,  2\Re k_-(z) \, |k_+(z)+k_-(z)|^2}
  \,.\label{lowerRes}
\end{align}
On the other hand, we have
\begin{equation}\label{f0}
  \|f_0\|^2 = \frac{1}{2\Re k_+(z)}
  \,.
\end{equation}
Hence, using~(\ref{equivRek}) and~(\ref{equivk+k}),
\[
  \frac{\|(H-z)^{-1}f_0\|}{\|f_0\|} 
  \geq \frac{1}{2\sqrt{\Re k_+(z)\Re k_-(z)}\, |k_+(z)+k_-(z)|} 
  \sim \frac{\tau}{\sqrt{1-\delta^2}}
\]
as $\tau\rightarrow+\infty\,$, and the lower bound in (\ref{EstResIntro}) follows.
\end{proof}
\begin{Remark}[Irrelevance of discontinuity]\label{RemSingularity}
Although the proof above relies on the particular form of 
the potential $i\,\sgn(x)$, 
it turns out that the discontinuity at $x=0$ is not responsible 
for the spectral instability highlighted by Theorem~\ref{thmEstResIntro}. 
Indeed, consider instead of the potential $i\,\sgn(x)$
a smooth potential $V(x)$ such that, for some $a>0$, the difference
 \[
  h(x):=i\,\sgn(x)-V(x)
 \]
 is supported in the interval $[-a,0]$. 
In order to get a lower bound for 
the norm of the resolvent of the regularised operator 
$\tilde H := -\frac{d^2}{dx^2}+V(x)$,
 we shall use the pseudomode
 \[
  g_0 := (H-z)^{-1}f_0\,,
 \]
where the function~$f_0$ is introduced in~\eqref{pseudomode}. 
Using again the asymptotic expansions (\ref{expansion}),
one can check that, provided that $\Re z$ is large enough,
\[
 \|hg_0\|^2 \leq C\,(\Re z)^2
\]
for some $C>0$ independent of~$z$. 
Thus, in view of (\ref{f0}), we have
\[
 \|(\tilde H-z)g_0\|\leq\|f_0\|+\|hg_0\| = \mathcal{O}(\Re z)
\]
as $\Re z\rightarrow+\infty$, $z\in\CS$.
On the other hand, (\ref{lowerRes}) yields 
\[
 \|g_0\|^2\geq C'\, (\Re z)^{5/2}
\]
for some $C'>0$ independent of~$z$. 
Consequently, $g_0$ is a $(\Re z)^{-1/4}$-pseudomode for $\tilde H-z$, 
or more specifically,
\begin{equation}\label{estSmooth}
\|(\tilde H-z)^{-1}\|\geq c \, (\Re z)^{1/4}
\end{equation}
with $c>0$ independent of~$z$, 
as $\Re z\rightarrow+\infty$, $z\in\CS$.

Summing up, 
despite of the fact that the lower bound in~(\ref{estSmooth}) 
is not as good as that of Theorem~\ref{thmEstResIntro},
the presence of non-trivial pseudospectra 
for the operator~$\tilde H$ clearly indicates 
that the discontinuity of the potential $i\,\sgn(x)$ 
does not really play any essential role
in the spectral instability of~$H$.
\end{Remark}
%

\section{General properties of the perturbed operator}\label{sPert}
%
In this section, we state some basic properties about 
the perturbed operator~$H_\eps$ introduced in~\eqref{operator.weak}.
Here~$\eps$ is not necessarily small and positive. 

\subsection{Definition of the perturbed operator}\label{ssDefA+V}
The unperturbed operator~$H$ 
introduced in~\eqref{operator} is associated 
(in the sense of the representation theorem \cite[Thm.~VI.2.1]{Kato})
with the sesquilinear form
$$
\begin{aligned}
  h(\psi,\phi) &:= \int_\mathbb{R}\psi'(x)\bar \phi'(x)\,dx 
  + i \int_0^{+\infty}\psi(x)\bar \phi(x)\,dx
  - i \int_{-\infty}^0\psi(x)\bar \phi(x)\,dx
  \,, 
  \\
  \Dom(h) &:= W^{1,2}(\Real)
  \,.
\end{aligned}
$$
In view of~\eqref{Num}, $h$~is sectorial with vertex~$-1$ and semi-angle~$\pi/4$.
In fact, $h$~is obtained as a bounded perturbation of the non-negative
form~$q$ associated with the free Hamiltonian~$-\Delta$, 
$$
\begin{aligned}
  q(\psi,\phi) &:= \int_\mathbb{R}\psi'(x)\bar \phi'(x)\,dx 
  \,, 
  \\
  \Dom(q) &:= W^{1,2}(\Real)
  \,.
\end{aligned}
$$

Given any function $V\in L^1(\mathbb{R})$, 
let~$v$ be the sesquilinear form of the corresponding multiplication operator
(that we also denote by~$V$), \ie,
\[
\begin{aligned}
  v(\psi,\phi) &:= \int_\mathbb{R}V(x)\psi(x)\bar \phi(x)\,dx
  \,,
  \\
  \Dom(v) &:= \left\{\psi\in L^2(\mathbb{R}) : 
  \ |V|^{1/2}\psi\in L^2(\mathbb{R})\right\}
  \,.
\end{aligned}
\]
As usual, we denote by $v[\psi]:=v(\psi,\psi)$ the corresponding quadratic form. 

\begin{lemma}\label{Lem.relative}
Let $V\in L^1(\mathbb{R})$. Then $\Dom(v) \supset W^{1,2}(\Real)$
and, for every $\psi \in W^{1,2}(\Real)$,
\begin{equation}
  |v[\psi]| \leq 2 \|V\|_{L^1(\Real)} \|\psi'\| \|\psi\|  
  \,.
\end{equation}
\end{lemma}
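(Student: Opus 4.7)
The plan is to reduce the bound on the quadratic form to the standard one-dimensional Sobolev embedding $W^{1,2}(\mathbb{R}) \hookrightarrow L^\infty(\mathbb{R})$, since $v[\psi] = \int_\mathbb{R} V(x)\,|\psi(x)|^2\,dx$ obviously satisfies
\[
  |v[\psi]| \;\leq\; \int_\mathbb{R} |V(x)|\,|\psi(x)|^2\,dx \;\leq\; \|V\|_{L^1(\mathbb{R})}\,\|\psi\|_\infty^2 .
\]
Thus the whole statement (including the domain inclusion $W^{1,2}(\mathbb{R}) \subset \Dom(v)$) will follow once I establish the pointwise inequality
\[
  \|\psi\|_\infty^2 \;\leq\; 2\,\|\psi'\|\,\|\psi\|
  \qquad\text{for all }\psi \in W^{1,2}(\mathbb{R}).
\]

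To prove this Sobolev embedding with the sharp-looking constant $2$, I would work with the continuous representative of $\psi$ (existing in one dimension) and fix $x \in \mathbb{R}$. Writing
\[
  |\psi(x)|^2 \;=\; \int_{-\infty}^x \frac{d}{dt}\bigl(|\psi(t)|^2\bigr)\,dt
  \;=\; \int_{-\infty}^x \bigl(\psi'(t)\,\overline{\psi(t)} + \psi(t)\,\overline{\psi'(t)}\bigr)\,dt,
\]
I would estimate this by $2\int_\mathbb{R} |\psi'(t)|\,|\psi(t)|\,dt$ and apply the Cauchy--Schwarz inequality to obtain $|\psi(x)|^2 \leq 2\,\|\psi'\|\,\|\psi\|$. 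Taking the supremum over $x$ gives the desired Sobolev bound.

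Combining the two inequalities immediately yields
\[
  |v[\psi]| \;\leq\; 2\,\|V\|_{L^1(\mathbb{R})}\,\|\psi'\|\,\|\psi\|,
\]
which is finite for every $\psi \in W^{1,2}(\mathbb{R})$. In particular $\int_\mathbb{R} |V|\,|\psi|^2 < \infty$, i.e.\ $|V|^{1/2}\psi \in L^2(\mathbb{R})$, so $W^{1,2}(\mathbb{R}) \subset \Dom(v)$, as claimed.

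There is no real obstacle here: the only ingredient is the one-dimensional Sobolev embedding, which is elementary. The one minor point to be careful about is passing to the continuous representative so that the pointwise identity for $|\psi(x)|^2$ is legitimate (this can be justified by density of $C_0^\infty(\mathbb{R})$ in $W^{1,2}(\mathbb{R})$, proving the inequality first for smooth $\psi$ and then extending by continuity of both sides in the $W^{1,2}$-topology).
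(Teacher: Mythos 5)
Your proof is correct. You route the estimate through the one-dimensional Sobolev embedding $\|\psi\|_\infty^2 \leq 2\|\psi'\|\,\|\psi\|$ and then pull out $\|V\|_{L^1}$ as a prefactor, whereas the paper avoids any mention of $\|\psi\|_\infty$: it sets $f(x):=\int_{-\infty}^x V$, writes $v[\psi]=\int f'|\psi|^2 = -\int f\,(|\psi|^2)'$ by integration by parts, bounds $\|f\|_\infty \leq \|V\|_{L^1}$, and finishes with Cauchy--Schwarz on $\int 2|\psi'||\psi|$. The two arguments are dual applications of the fundamental theorem of calculus --- you apply it to $|\psi|^2$ (getting a pointwise bound on $\psi$), the paper applies it to $V$ (getting a pointwise bound on its primitive) --- and both hinge on the same identity $(|\psi|^2)'=2\Re(\psi'\bar\psi)$ and the same Cauchy--Schwarz step, hence yield the identical constant $2$. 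Yours has the advantage of invoking a textbook embedding; the paper's is marginally more self-contained since it never passes to the continuous representative of $\psi$ (both, in the end, argue by density of $C_0^\infty(\mathbb{R})$, which you also flag as the minor technical point). Either proof is fully rigorous.
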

\begin{proof}
Set $f(x) := \int_{-\infty}^x V(\xi) d\xi$.
For every $\psi \in C_0^\infty(\Real)$, 
an integration by parts together with the Schwarz inequality yields
\begin{align*}
  |v[\psi]| 
  &= \left| \int_\Real f'(x) |\psi(x)|^2 \, dx \right|
  = \left| \int_\Real f(x) \, 2 \Re\big(\psi'(x) \bar\psi(x)\big) \, dx \right|
  \\
  &\leq 2 \|V\|_{L^1(\Real)} \|\psi'\| \|\psi\|  
  \,.
\end{align*}
By density of $C_0^\infty(\Real)$ in $W^{1,2}(\Real)$,
the inequality extends to all $\psi \in W^{1,2}(\Real)$
and, in particular, $|v[\psi]| < \infty$ whenever $\psi \in W^{1,2}(\Real)$.
\end{proof}

It follows from the lemma that~$v$ is $\frac{1}{2}$-subordinated to~$q$,
which in particular implies that~$v$ is relatively bounded with respect to~$q$  
with the relative bound equal to zero.
Classical stability results (see, \eg, \cite[Sec.~5.3.4]{KS-book})
then ensure that the form $q+v$ is sectorial and closed.
Since~$h$ is a bounded perturbation of~$q$,
we also know that $h_1 := h+v$ is sectorial and closed.
We define~$H_1$ to be the m-sectorial operator 
associated with the form~$h_1$.
The representation theorem yields 
\begin{equation}\label{H1.weak}
\begin{aligned}
  H_1\psi &= -\psi''+i \sgn \psi + V\psi \,,
  \\
  \Dom(H_1) 
  &= \left\{
  \psi \in W^{1,2}(\Real) : \ \exists\eta \in \sii(\Real), 
  \ \forall \phi\in W^{1,2}(\Real), \
  h_1(\psi,\phi) = (\eta,\phi)
  \right\}
  \\
  &= \left\{
  \psi \in W^{1,2}(\Real) : \ -\psi'' + V\psi \in \sii(\Real)
  \right\}
  \,,
\end{aligned}
\end{equation}
where $-\psi'' + V\psi$ should be understood as a distribution.
By the replacement $V \mapsto \eps V$, 
we introduce in the same way as above the form $h_\eps := h+\eps v$
and the associated operator~$H_\eps$ for any $\eps \in \Real$.
Of course, we have $H_0=H$.

\subsection{The Birman-Schwinger principle}\label{ssBS}
As regards spectral theory,
$H_\eps$~represents a singular perturbation of~$H$,
for we are perturbing an operator with purely essential spectrum.   
An efficient way to deal with such problems in self-adjoint settings
is the method of the \emph{Birman-Schwinger principle},
due to which a study of discrete eigenvalues of
the differential operator~$H_\eps$
is transferred to a spectral analysis of an integral operator.
We refer to~\cite{Birman_1961,Schwinger_1967} for the original works
and to~\cite{SiQF,Si,BGS,Klaus} for an extensive development of the method
for Schr\"odinger operators.
In recent years, the technique has been also applied to Schr\"odinger
operators with complex potentials 
(see, \eg, \cite{Abramov-Aslanyan-Davies_2001,Laptev-Safronov_2009,Frank_2011}).
However, our setting differs from all the previous works 
in that the unperturbed operator~$H$ 
is already non-self-adjoint and its resolvent kernel substantially
differs from the resolvent of the free Hamiltonian. 
The objective of this subsection is to carefully establish 
the Birman-Schwinger principle in our unconventional situation.

In the following, given $V\in L^1(\mathbb{R})$, we denote 
\[
  V_{1/2}(x) := |V|^{1/2}e^{i\arg V(x)}
  \,,
\]
so that $V = |V|^{1/2}V_{1/2}$.

We have introduced~$H$ as an unbounded operator with domain $\Dom(H) = W^{2,2}(\Real)$
acting in the Hilbert space $\sii(\Real)$.
It can be regarded as a bounded operator from $W^{2,2}(\Real)$ to $\sii(\Real)$. 
More interestingly, using the variational formulation, 
$H$~can be also viewed as a bounded operator 
from $W^{1,2}(\mathbb{R})$ to $W^{-1,2}(\mathbb{R})$,
by defining $H\psi$ for all $\psi\in W^{1,2}(\mathbb{R})$ by
\[
 \forall \phi\in W^{1,2}(\mathbb{R})
  \,,\qquad 
  {}_{-1}\sca{H\psi}{\phi}_{+1} := h(\psi,\phi)
  \,,
\]
where ${}_{-1}\sca{\cdot}{\cdot}_{+1}$ denotes the duality bracket between 
$W^{-1,2}(\mathbb{R})$ and $W^{1,2}(\mathbb{R})$.

Similarly, in addition to regarding the multiplication operators 
$|V|^{1/2}$ and $V_{1/2}$ as operators 
from $W^{1,2}(\mathbb{R})$ to $L^2(\mathbb{R})$,
we can view them as operators from $L^2(\mathbb{R})$ to $W^{-1,2}(\mathbb{R})$, 
due to the relative boundedness of~$v$ with respect to~$q$ 
(\cf~Lemma~\ref{Lem.relative} and the text below it).

Finally, let us notice that, for all $z\in\mathbb{C}\setminus\sigma(H)$, 
the resolvent $(H-z)^{-1}$ can be viewed as an operator 
from $W^{-1,2}(\mathbb{R})$ to $W^{1,2}(\mathbb{R})$.
Indeed, for all $\eta\in W^{-1,2}(\mathbb{R})$, 
there exists a unique $\psi\in W^{1,2}(\mathbb{R})$ such that
\begin{equation}\label{resolvent.weak}
  \forall \phi\in W^{1,2}(\mathbb{R})
  \,, \qquad 
  {}_{-1}\sca{\eta}{\phi}_{+1} = h(\psi,\phi) - z (\psi,\phi)
  \,,
\end{equation}
where $(\cdot,\cdot)$ denotes the inner product in~$\sii(\Real)$.
Hence the operator 
$(H-z) : W^{1,2}(\mathbb{R}) \to W^{-1,2}(\mathbb{R})$ 
is bijective.

With the above identifications, 
for all $z \in \Com\setminus\sigma(H)$,
we introduce 
\begin{equation}\label{BS-operator}
  K_z := |V|^{1/2}(H-z)^{-1}V_{1/2}
\end{equation}
as a bounded operator on $\sii(\Real)$ to $\sii(\Real)$.
$K_z$~is an integral operator with kernel
\begin{equation}\label{BS-kernel}
  \mathcal{K}_z(x,y) := |V|^{1/2}(x) \, \mathcal{R}_z(x,y) \, V_{1/2}(y)
  \,,
\end{equation}
where~$\mathcal{R}_z$ is the kernel of the resolvent $(H-z)^{-1}$ 
written down explicitly in~(\ref{exprKernel}).
The following result shows that~$K_z$ is in fact compact. 
\begin{lemma}\label{Lem.BS}
Let $V\in L^1(\mathbb{R})$.
For all $z \in \Com\setminus\sigma(H)$,
$K_z$ is a Hilbert-Schmidt operator.
\end{lemma}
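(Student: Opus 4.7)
The plan is to prove Hilbert--Schmidt by establishing an $L^2(\Real^2)$ bound on the integral kernel $\mathcal{K}_z$, using the fact that $\mathcal{R}_z$ is uniformly bounded on $\Real^2$ for any fixed $z$ in the resolvent set.

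First I would fix $z \in \Com\setminus\sigma(H) = \Com\setminus(\Real_+ + i\{\pm 1\})$ and observe that $\Re k_\pm(z) > 0$ strictly, since the principal branch of the square root places $\sqrt{w}$ in the open right half-plane whenever $w\notin(-\infty,0]$, and $i-z$, $-i-z$ both avoid $(-\infty,0]$ precisely because $z$ avoids the two rays. Inspecting the explicit formula~\eqref{exprKernel}, every exponential factor has the form $e^{-k_\pm(z) u}$ with $u \geq 0$, so each is bounded in modulus by $1$, and the algebraic prefactors
\[
  \frac{1}{k_+(z)+k_-(z)}
  \,, \quad
  \frac{1}{2k_\pm(z)}
  \,, \quad
  \frac{k_+(z)-k_-(z)}{2k_\pm(z)\big(k_+(z)+k_-(z)\big)}
\]
are finite complex numbers (recall $k_+(z)+k_-(z)\ne 0$ off the spectrum, as can be checked from $k_+^2-k_-^2 = 2i$). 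This yields a constant $M(z) < \infty$ such that
\[
  |\mathcal{R}_z(x,y)| \leq M(z)
  \qquad \text{for all } (x,y)\in\Real^2 .
\]

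Next I would justify the kernel representation $[K_z\psi](x) = \int_\Real \mathcal{K}_z(x,y)\,\psi(y)\,dy$. For $\psi \in C_0^\infty(\Real)$, the function $V_{1/2}\psi$ lies in $L^1(\Real)\cap L^2(\Real)$ (since $|V|\in L^1$ and $\psi$ is bounded of compact support), so Proposition~\ref{propIntegral} applies directly and gives
\[
  \bigl[(H-z)^{-1}V_{1/2}\psi\bigr](x)
  = \int_\Real \mathcal{R}_z(x,y)\, V_{1/2}(y)\,\psi(y)\,dy
  \,.
\]
Multiplying by $|V|^{1/2}(x)$ produces the desired integral formula on the dense subspace $C_0^\infty(\Real)$. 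By the functional-analytic identifications preceding the statement, $K_z$ is bounded on $\sii(\Real)$, so the identity extends by density once we also know the right-hand side defines a bounded operator---which will follow from the Hilbert--Schmidt estimate of the next step.

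Finally, using the uniform bound and the fact that $|V_{1/2}|^2 = |V|$, I would compute
\[
  \iint_{\Real\times\Real} |\mathcal{K}_z(x,y)|^2\, dx\, dy
  = \iint_{\Real\times\Real} |V(x)|\,|\mathcal{R}_z(x,y)|^2\,|V(y)|\, dx\, dy
  \leq M(z)^2\,\|V\|_{L^1(\Real)}^2 ,
\]
so $\mathcal{K}_z \in L^2(\Real^2)$, and the associated integral operator is Hilbert--Schmidt with $\|K_z\|_{HS}\leq M(z)\,\|V\|_{L^1(\Real)}$. Combined with the previous paragraph, $K_z$ coincides with this integral operator, hence is Hilbert--Schmidt.

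The only delicate point is the density argument identifying the formally composed operator $|V|^{1/2}(H-z)^{-1}V_{1/2}$ with the integral operator of kernel $\mathcal{K}_z$, because $V_{1/2}\psi$ generally lies in $W^{-1,2}(\Real)$ rather than $\sii(\Real)$ for a general $\psi\in\sii(\Real)$; but the calculation goes through on $C_0^\infty(\Real)$ and then extends by continuity, with the $L^2(\Real^2)$ bound on $\mathcal{K}_z$ providing the required boundedness of the limit. Pointwise bounds on $\mathcal{R}_z$, not decay, are what drive the proof, which is why the merely bounded weight $V \in L^1(\Real)$ suffices.
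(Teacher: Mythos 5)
Your proof is correct and takes essentially the same route as the paper: bound $|\mathcal{R}_z(x,y)|$ uniformly over $\Real^2$ for fixed $z$ off the spectrum (using $\Re k_\pm(z)>0$ and the nonvanishing of $k_+(z)+k_-(z)$), and then conclude $\|K_z\|_{\rm HS}^2 \leq \|V\|_{L^1}^2 \sup |\mathcal{R}_z|^2 < \infty$. The paper simply states the Hilbert--Schmidt bound directly without the density/identification step you add; that step is harmless but not part of the argument the authors record.
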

\begin{proof}
By definition of the Hilbert-Schmidt norm, 
\begin{equation}\label{estBSinter}
\begin{aligned}
  \|K_z\|_\mathrm{HS} 
  &= \int_{\mathbb{R}^2}|V(x)||\mathcal{R}_z(x,y)|^2|V(y)| \, dx \, dy 
  \\
  &\leq \|V\|_{L^1(\Real)}^2
  \sup_{(x,y)\in\mathbb{R}^2}|\mathcal{R}_z(x,y)|^2 
  \,.
\end{aligned}
\end{equation}
According to (\ref{exprKernel}), we have
\begin{eqnarray*}
 \lefteqn{\sup_{(x,y)\in\mathbb{R}^2}|\mathcal{R}_z(x,y)|^2} 
  \\
  && 
  \leq \frac{1}{|k_+(z)+k_-(z)|^2} + \left(\frac{1}{|k_+(z)|^2} 
  + \frac{1}{|k_-(z)|^2}\right)
  \left(1+\frac{|k_+(z)-k_-(z)|^2}{|k_+(z)+k_-(z)|^2}\right)
  \,,
\end{eqnarray*}
where the right hand side is finite for all $z\in\mathbb{C}\setminus\sigma(H)$.
\end{proof}

We are now in a position to state the Birman-Schwinger principle 
for our operator~$H_\eps$.
\begin{theorem}[Birman-Schwinger principle]\label{thmBirmanSchwinger}
Let $V\in L^1(\mathbb{R})$ and $\eps \in \Real$.
For all $z\in\mathbb{C}\setminus\sigma(H)$, we have
\[
  z\in\sigma_\mathrm{p}(H_\eps) 
  \quad\Longleftrightarrow\quad 
  -1\in\sigma(\eps K_z)
  \,.
\]
\end{theorem}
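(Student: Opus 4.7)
The plan is to implement the standard Birman--Schwinger factorisation, adapted to the dual-space framework set up in Subsection~\ref{ssDefA+V}. The cornerstone is the following chain of equivalent weak identities, valid for $\psi \in W^{1,2}(\mathbb{R})$:
\begin{equation*}
H_\eps \psi = z\psi \ \Longleftrightarrow\ (H-z)\psi = -\eps V\psi \text{ in } W^{-1,2}(\mathbb{R}) \ \Longleftrightarrow\ \psi = -\eps (H-z)^{-1}V\psi \text{ in } W^{1,2}(\mathbb{R}),
\end{equation*}
where the last step uses that $H-z: W^{1,2}(\mathbb{R}) \to W^{-1,2}(\mathbb{R})$ is a bijection, as recorded in~\eqref{resolvent.weak}.

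For the forward implication, I start from a nonzero $\psi$ with $H_\eps \psi = z\psi$. The one-dimensional embedding $W^{1,2}(\mathbb{R}) \hookrightarrow L^\infty(\mathbb{R})$ together with Lemma~\ref{Lem.relative} ensures that the auxiliary function $\eta := |V|^{1/2}\psi$ lies in $L^2(\mathbb{R})$. Using the factorisation $V = V_{1/2}|V|^{1/2}$ of multiplication operators, I rewrite the fixed-point equation as $\psi = -\eps (H-z)^{-1} V_{1/2} \eta$, and multiplying both sides by $|V|^{1/2}$ yields $\eta = -\eps K_z \eta$. It remains to verify $\eta \neq 0$: otherwise $V\psi = 0$ would force $(H-z)\psi = 0$, contradicting $z \notin \sigma(H)$ and $\psi\neq 0$. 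Hence $-1$ is an eigenvalue of $\eps K_z$, and a fortiori $-1 \in \sigma(\eps K_z)$.

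For the reverse implication, the compactness of $K_z$ established in Lemma~\ref{Lem.BS} upgrades $-1 \in \sigma(\eps K_z)$ to an eigenvalue when $\eps \neq 0$ (the case $\eps = 0$ is trivial since $\sigma(0)=\{0\}$ does not contain $-1$ and $\sigma_\mathrm{p}(H)\cap(\mathbb{C}\setminus\sigma(H))=\emptyset$). Thus there exists $\eta \in L^2(\mathbb{R}) \setminus \{0\}$ with $(I+\eps K_z)\eta = 0$. I then set $\psi := -\eps (H-z)^{-1} V_{1/2} \eta$, which lies in $W^{1,2}(\mathbb{R})$ since $V_{1/2}: L^2(\mathbb{R}) \to W^{-1,2}(\mathbb{R})$ is bounded by duality from Lemma~\ref{Lem.relative}. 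A direct computation gives $|V|^{1/2}\psi = -\eps K_z \eta = \eta$, hence $V\psi = V_{1/2}\eta$, so that $(H-z)\psi = -\eps V\psi$ in $W^{-1,2}(\mathbb{R})$; this is equivalent via~\eqref{H1.weak} to $H_\eps\psi = z\psi$. The nonvanishing of $\psi$ follows from $\eta = |V|^{1/2}\psi \neq 0$.

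The main obstacle, technical rather than conceptual, is to keep the functional-analytic bookkeeping straight across the triple
\[
W^{1,2}(\mathbb{R}) \xrightarrow{|V|^{1/2}} L^2(\mathbb{R}) \xrightarrow{V_{1/2}} W^{-1,2}(\mathbb{R}),
\]
with $(H-z)^{-1}$ mapping $W^{-1,2}(\mathbb{R})$ back into $W^{1,2}(\mathbb{R})$. Once the boundedness of $|V|^{1/2}$ and $V_{1/2}$ between these spaces is secured (which is what the argument of Lemma~\ref{Lem.relative} yields, upon replacing $V$ by $|V|$ in the integration by parts and invoking density of $C_0^\infty(\mathbb{R})$ in $W^{1,2}(\mathbb{R})$), the algebraic manipulations become routine and both directions of the equivalence follow from one and the same identity $\eta = -\eps K_z \eta$.
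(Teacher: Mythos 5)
Your proof is correct, and it takes a cleaner route than the paper's in the forward direction. The paper's argument for $z\in\sigma_\mathrm{p}(H_\eps)\Rightarrow -1\in\sigma(K_z)$ proceeds by dual testing: given an arbitrary $\varphi\in L^2(\Real)$, it introduces the auxiliary function $\eta:=(H^*-\bar z)^{-1}|V|^{1/2}\varphi$, exploits the $\mathcal{T}$-self-adjointness of $H$, and computes $(K_z g,\varphi)=-(g,\varphi)$ via a chain of conjugations involving $h^*$. You instead extract the fixed-point identity $\psi=-\eps(H-z)^{-1}V\psi$ in $W^{1,2}(\Real)$ directly from the bijectivity of $H-z:W^{1,2}(\Real)\to W^{-1,2}(\Real)$ and simply apply $|V|^{1/2}$ to both sides; this avoids the adjoint entirely and makes the forward and reverse implications two faces of the same identity $\eta=-\eps K_z\eta$, which is conceptually tighter. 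You also pay attention to two points the paper glosses over: you verify that the Birman--Schwinger eigenfunction $\eta=|V|^{1/2}\psi$ is nonzero (via the contrapositive $V\psi=0\Rightarrow(H-z)\psi=0$), that the reconstructed $\psi$ is nonzero in the reverse direction, and you handle the degenerate case $\eps=0$ explicitly rather than implicitly through the reduction to $\eps=1$. The reverse direction itself, including the appeal to compactness of $K_z$ from Lemma~\ref{Lem.BS} to upgrade $-1\in\sigma(\eps K_z)$ to an eigenvalue, matches the paper.
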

\begin{proof}
Clearly, it is enough to establish the equivalence for $\eps=1$.

If $z \in \sigma_\mathrm{p}(H_1)$, then there exists a non-trivial
function $\psi \in \Dom(H_1)$ such that $H_1\psi=z\psi$.
In particular, $\psi \in \Dom(h_1)=W^{1,2}(\Real)$ and 
\begin{equation}\label{ev-weak}
  h_1(\psi,\phi) \equiv h(\psi,\phi) + v(\psi,\phi) = z (\psi,\phi) 
\end{equation}
holds for every $\phi \in W^{1,2}(\Real)$.
We set $g:=|V|^{1/2}\psi \in \sii(\Real)$.
Given an arbitrary test function $\varphi \in \sii(\Real)$,
we introduce an auxiliary function 
$\eta := (H^*-\bar{z})^{-1}|V|^{1/2}\varphi \in W^{1,2}(\Real)$.
(Note that $\sigma(H^*)=\sigma(H)$ and that the spectrum 
is symmetric with respect to the real axis,
so the resolvent $(H^*-\bar{z})^{-1}$ is well defined.
Moreover, recall that $H$~is $\mathcal{T}$-self-adjoint.)
We have
$$
\begin{aligned}
  (K_z g,\varphi) 
  &= v(\psi,\eta) 
  \\
  &= - h(\psi,\eta) + z (\psi,\eta)
  = \overline{- h^*(\eta,\psi) + \bar{z} (\eta,\psi)}
  \\
  &= - \overline{{}_{-1}\sca{|V|^{1/2}\varphi}{\psi}_{+1}}
  \\
  &= - \overline{(\varphi,|V|^{1/2}\psi)}
  \\
  &= - (g,\varphi)
  \,.
\end{aligned}
$$
Here the first equality uses the integral representation~\eqref{BS-kernel} of~$K_z$,
the second equality is due to~\eqref{ev-weak}
and the equality on the third line is a version of~\eqref{resolvent.weak} for~$H^*$.
Hence, $g$~is an eigenfunction of~$K_z$ corresponding to the eigenvalue~$-1$.

Conversely, if $-1\in\sigma(K_z)$, 
then~$-1$ is an eigenvalue of~$K_z$, 
because~$K_z$ is compact (\cf~Lemma~\ref{Lem.BS}).
Hence, there exists a non-trivial $g \in \sii(\Real)$ 
such that $K_z g = -g$.
Defining, $\psi := (H-z)^{-1} V_{1/2} \, g \in W^{1,2}(\Real)$,
we have 
$$
\begin{aligned}
  h_1(\psi,\phi)
  &= h(\psi,\phi) - z (\psi,\phi) + z (\psi,\phi) + v(\psi,\phi) 
  \\
  &= {}_{-1}\sca{V_{1/2} \, g}{\psi}_{+1} + z (\psi,\phi) 
  + {}_{-1}\sca{V\psi}{\phi}_{+1}
  \\
  &= {}_{-1}\sca{V_{1/2} \, g}{\psi}_{+1} + z (\psi,\phi) 
  + {}_{-1}\sca{V_{1/2}K_z g}{\phi}_{+1}
  \\ 
  &= z (\psi,\phi) 
\end{aligned}
$$
for all $\phi \in W^{1,2}(\Real)$,
where the eigenvalue equation is used in the last equality.
It follows that $\psi \in \Dom(H)$ (\cf~\eqref{H1.weak})
and $H\psi=z\psi$. 
\end{proof}

\subsection{Stability of the essential spectrum}
As the last result of this section, we locate the essential spectrum 
of the perturbed operator~$H_\eps$. 

Since there exist various definitions of the essential spectrum
for non-self-adjoint operators 
(\cf~\cite[Sec.~IX]{Edmunds-Evans} or \cite[Sec.~5.4]{KS-book}),
we note that we use the widest (that due to Browder) in this paper.
More specifically, given a closed operator~$T$ in a Hilbert space~$\mathcal{H}$, 
we set 
$
  \sigma_{\mathrm{ess}}(T) 
  := \sigma(T) \setminus \sigma_{\mathrm{disc}}(T)
$, 
where the discrete spectrum is defined as the set of isolated 
eigenvalues~$\lambda$ of~$T$ which have finite algebraic multiplicity
and such that $\Ran(T-\lambda)$ is closed in~$\mathcal{H}$.

Our stability result will follow from the following compactness property.
\begin{lemma}\label{Lem.compact}
Let $V\in L^1(\mathbb{R})$ and $\eps \in \Real$.
For all $z\in\mathbb{C}\setminus[\sigma(H)\cup\sigma(H_\eps)]$,
the resolvent difference
$
  (H_\eps-z)^{-1}-(H-z)^{-1}
$
is a compact operator in $\sii(\Real)$.
\end{lemma}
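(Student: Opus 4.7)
The plan is to apply a second resolvent identity, factorised through the Birman--Schwinger-type decomposition $V = |V|^{1/2} V_{1/2}$, so that the difference splits into a product containing a Hilbert--Schmidt factor analogous to $K_z$ in Lemma~\ref{Lem.BS}.

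First, I would establish the weak-sense resolvent identity
\begin{equation}
(H_\eps - z)^{-1} - (H-z)^{-1} = -\eps\,(H_\eps - z)^{-1} V_{1/2} \cdot |V|^{1/2} (H-z)^{-1}.
\label{planres}
\end{equation}
The three factors on the right are interpreted exactly as in Section~\ref{ssBS}: an application of Cauchy--Schwarz together with Lemma~\ref{Lem.relative} shows that $V_{1/2}$ and $|V|^{1/2}$ are bounded from $\sii(\Real)$ into $W^{-1,2}(\Real)$ (and dually from $W^{1,2}(\Real)$ into $\sii(\Real)$), while the resolvents $(H-z)^{-1}$ and $(H_\eps-z)^{-1}$ extend to bounded maps from $W^{-1,2}(\Real)$ into $W^{1,2}(\Real)$ for $z$ outside their respective spectra (immediately via Lax--Milgram when $z$ lies outside the numerical range, and then generally by the first resolvent identity relative to such a base point). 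In this setting~\eqref{planres} reduces to the usual second resolvent identity, multiplied out by the pointwise factorisation $V = V_{1/2}\cdot |V|^{1/2}$.

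Second, I would show that $|V|^{1/2}(H-z)^{-1}$ is Hilbert--Schmidt on $\sii(\Real)$. Its integral kernel is $|V(x)|^{1/2}\mathcal{R}_z(x,y)$, and
\[
\bigl\| |V|^{1/2}(H-z)^{-1} \bigr\|_{\mathrm{HS}}^2 = \int_{\Real^2} |V(x)|\,|\mathcal{R}_z(x,y)|^2\,dx\,dy \;\le\; \|V\|_{L^1(\Real)} \sup_{x\in\Real} \int_{\Real} |\mathcal{R}_z(x,y)|^2\,dy,
\]
and the last supremum is finite by the exponential decay of $\mathcal{R}_z$ visible in~\eqref{exprKernel}. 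The remaining factor $(H_\eps - z)^{-1} V_{1/2}$ is bounded on $\sii(\Real)$, since $V_{1/2}: \sii(\Real)\to W^{-1,2}(\Real)$ is bounded, $(H_\eps-z)^{-1}: W^{-1,2}(\Real)\to W^{1,2}(\Real)$ is bounded by the first step, and the embedding $W^{1,2}(\Real) \hookrightarrow \sii(\Real)$ is continuous. Substituting into~\eqref{planres}, the resolvent difference is the product of a bounded operator and a Hilbert--Schmidt operator, hence itself Hilbert--Schmidt, and in particular compact.

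The main obstacle is the first step: since $V$ is only form-bounded with respect to $H$, the identity~\eqref{planres} cannot be read as a simple operator-norm identity on $\sii(\Real)$, and must be interpreted between the Sobolev triple $W^{1,2}(\Real) \hookrightarrow \sii(\Real) \hookrightarrow W^{-1,2}(\Real)$. Fortunately, this framework has already been assembled in Section~\ref{ssBS} for the Birman--Schwinger principle, so gathering the pieces is routine rather than delicate; the Hilbert--Schmidt computation above is then immediate from the explicit kernel~\eqref{exprKernel}, and compactness follows.
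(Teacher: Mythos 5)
Your proposal is correct and follows essentially the same strategy as the paper: both factor the resolvent difference via the second resolvent identity as $-\eps\,(H_\eps-z)^{-1}V_{1/2}\cdot|V|^{1/2}(H-z)^{-1}$, interpreted through the Sobolev triple $W^{1,2}\hookrightarrow L^2\hookrightarrow W^{-1,2}$, and both deduce compactness from the explicit kernel $\mathcal{R}_z$. The only (cosmetic) difference is that you show $B=|V|^{1/2}(H-z)^{-1}$ is Hilbert--Schmidt directly via $\sup_x\int|\mathcal{R}_z(x,y)|^2\,dy<\infty$, whereas the paper shows $BB^*$ is Hilbert--Schmidt via $\sup_{x,y}|\mathcal{N}_z(x,y)|<\infty$; both computations reduce to the same fact, namely $\mathcal{R}_z\in L^\infty(\Real;L^2(\Real))$, so the substance is identical.
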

\begin{proof}
It is straightforward to verify the resolvent equation
$$
  (H_\eps-z)^{-1}-(H-z)^{-1} = - \eps A^* B
  \,,
$$
where
$$
  A:= \overline{V}_{1/2} (H_\eps^*-\bar{z})^{-1}
  \qquad \mbox{and} \qquad 
  B := |V|^{1/2} (H-z)^{-1}
$$
are bounded operators 
(recall that $\Dom(h_\eps) = W^{1,2}(\Real) \subset \Dom(v)$).
It is thus enough to show that~$B$ is compact. 
It is equivalent to proving that $BB^*$ is compact.
However, $BB^*$~is an integral operator with kernel
$$
  |V|^{1/2}(x) \, \mathcal{N}_z(x,y) \,|V|^{1/2}(y)
  \,,
$$
where 
$$
  \mathcal{N}_z(x,y) :=
  \int_\mathbb{R}\mathcal{R}_z(x,\xi) \, \barr{\mathcal{R}_z(y,\xi)} \, d\xi
$$ 
is the integral kernel of $(H-z)^{-1}(H^*-\bar z)^{-1}$.
Consequently, 
\begin{equation}\label{estBB*}
  \|BB^*\|_\mathrm{HS} \leq \|V\|_{L^1(\Real)} 
  \sup_{(x,y)\in\mathbb{R}^2}|\mathcal{N}_z(x,y)|
  \,.
\end{equation}
Using~(\ref{exprKernel}), 
it is straightforward to check that, 
for all $z\in\Com\setminus\sigma(H)$, 
$\mathcal{R}_z\in L^\infty\big(\mathbb{R} ; L^2(\mathbb{R})\big)$, 
and thus the supremum on the right-hand side of~(\ref{estBB*}) 
is a finite ($z$-dependent) constant.
Summing up, $BB^*$ is Hilbert-Schmidt, in particular it is compact.
\end{proof}
\begin{proposition}\label{propStabEss}
Let $V\in L^1(\mathbb{R})$. For all $\eps \in \Real$, we have
\begin{equation}\label{ess.stability}
  \sigma_\mathrm{ess}(H_\eps) = \sigma_\mathrm{ess}(H) =  \mathbb{R}_++i\,\{-1,+1\}
  \,.
\end{equation}
\end{proposition}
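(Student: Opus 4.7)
The plan is to derive Proposition~\ref{propStabEss} as a direct consequence of Lemma~\ref{Lem.compact} together with the classical stability of the Browder essential spectrum under compact perturbations of the resolvent. Since Proposition~\ref{propSpectrum} already identifies $\sigma_\mathrm{ess}(H) = \mathbb{R}_+ + i\,\{-1,+1\}$, the whole task reduces to proving the first equality in~\eqref{ess.stability}, namely $\sigma_\mathrm{ess}(H_\eps) = \sigma_\mathrm{ess}(H)$.

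First I would verify that the intersection $\rho(H)\cap\rho(H_\eps)$ is non-empty, so that the compactness statement of Lemma~\ref{Lem.compact} is applicable at some concrete point. Because $h_\eps = h+\eps v$ is sectorial and closed (the form $v$ is infinitesimally form-bounded with respect to $q$ by Lemma~\ref{Lem.relative}, and $h$ is itself a bounded perturbation of $q$), the associated m-sectorial operator $H_\eps$ has its spectrum contained in some sector of $\Com$; in particular, any $z_0$ with sufficiently negative real part lies simultaneously in $\rho(H)$ and $\rho(H_\eps)$. For such a $z_0$, Lemma~\ref{Lem.compact} provides the compactness of the resolvent difference $(H_\eps-z_0)^{-1}-(H-z_0)^{-1}$ on $\sii(\Real)$.

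Second, I would invoke the standard stability theorem (see, e.g., \cite[Thm.~IX.2.4]{Edmunds-Evans} or \cite[Sec.~5.4]{KS-book}) which asserts that if the resolvent difference of two closed operators is compact at one point of their common resolvent set, then their Browder essential spectra coincide. Applied to $H$ and $H_\eps$ at the point $z_0$ above, this immediately delivers $\sigma_\mathrm{ess}(H_\eps) = \sigma_\mathrm{ess}(H)$, which together with Proposition~\ref{propSpectrum} yields~\eqref{ess.stability}.

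I do not foresee a substantial obstacle here: the analytic heart of the argument is the Hilbert--Schmidt estimate already performed inside Lemma~\ref{Lem.compact}, and the rest is an abstract citation. The only point that requires some care is the choice of the essential spectrum: the stability-under-compact-perturbations property holds for the widest (Browder) version adopted in the present paper, but fails in general for narrower variants, so one must invoke the cited theorem in the correct form. Beyond this bookkeeping, the proof is essentially a one-line deduction.
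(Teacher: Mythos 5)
There is a genuine error in the last step, and unfortunately your cautionary remark about it is exactly backwards. The standard stability theorem (e.g.\ \cite[Thm.~IX.2.4]{Edmunds-Evans}) applied to a compact resolvent difference gives equality of the \emph{narrower} essential spectra $\sigma_{ek}$, $k\leq 4$; the \emph{Browder} essential spectrum $\sigma_{e5}$ — the widest one, used in this paper — is \emph{not} in general stable under relatively compact perturbations, precisely because it also contains the ``filled-in'' components of the resolvent set of $\sigma_{e4}$, and a compact perturbation can create new bounded components of $\Com\setminus\sigma_{e4}$ that become swallowed by $\sigma_{e5}$. So the sentence ``the stability-under-compact-perturbations property holds for the widest (Browder) version $\ldots$ but fails in general for narrower variants'' has the roles of the two classes reversed, and the citation of Thm.~IX.2.4 as directly giving coincidence of Browder essential spectra is incorrect.

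The missing ingredient is the one the paper supplies: after obtaining $\sigma_{e4}(H_\eps)=\sigma_{e4}(H)$ from the compact resolvent difference, one observes that $\Com\setminus\sigma_\mathrm{ess}(H)=\Com\setminus\big(\mathbb{R}_++i\{-1,+1\}\big)$ is \emph{connected}. By \cite[Prop.~5.4.4]{KS-book} (or the corresponding statements in Edmunds--Evans), connectedness of the complement forces $\sigma_{e5}=\sigma_{e4}$ for both operators, so the Browder essential spectra coincide. Your first step — m-sectoriality of $H_\eps$ guarantees $\rho(H)\cap\rho(H_\eps)\neq\varnothing$, then Lemma~\ref{Lem.compact} — matches the paper's proof; it is only this final upgrade from $\sigma_{e4}$ to $\sigma_{e5}$ that is missing and must be added.
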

\begin{proof}
First of all, notice that, since~$H_\eps$ is m-sectorial for all $\eps \in \Real$, 
the intersection of the resolvent sets of~$H_\eps$ and~$H$ is not empty.
By Lemma~\ref{Lem.compact} and a classical stability result 
about the invariance of the essential spectra under perturbations
(see, \eg, \cite[Thm.~IX.2.4]{Edmunds-Evans}),
we immediately obtain~\eqref{ess.stability} for 
more restrictive definitions of the essential spectrum.
To deduce the result for our definition of the essential spectrum, 
it is enough to notice that the exterior of $\sigma_\mathrm{ess}(H)$
is connected (\cf~\cite[Prop.~5.4.4]{KS-book}).
\end{proof}
\begin{Remark}
In view of Proposition~\ref{propStabEss}, 
the equivalence of Theorem~\ref{thmBirmanSchwinger} remains to hold
if $\sigma_\mathrm{p}(H_\eps)$ is replaced by 
$\sigma(H_\eps)$ or $\sigma_\mathrm{disc}(H_\eps)$.
\end{Remark}
%

\section{Eigenvalue estimates}\label{sBS}
%
In this section, we consecutively prove 
Theorems~\ref{thmSpecBound1} and~\ref{thmSpecBound2}.

\subsection{Proof of Theorem~\ref{thmSpecBound1}}\label{ssBS1}
Our strategy is based on Theorem~\ref{thmBirmanSchwinger}
and on estimating the norm of the Birman-Schwinger operator~$K_z$
by its Hilbert-Schmidt norm.
To get a better estimate than that of~\eqref{estBSinter},
we proceed as follows.

Let us partition the complex plane into several regions where
$z \mapsto \mathcal{R}_z$ has a different behaviour. 
We set
\[
 \begin{aligned}
  D_+ & :=  
  \big\{z\in\mathbb{C} : \, |z-i|\leq 3/2\big\}\setminus\big(\mathbb{R}_++i\big)
  \,, \\
  D_- & :=  
  \big\{z\in\mathbb{C} : \, |z+i|\leq 3/2\big\}\setminus\big(\mathbb{R}_+-i\big)
  \,, \\
  U & :=   \mathbb{C}\setminus\big(\bar\CS\cup D_+\cup D_-\big)
  \,, \\
  W  & :=   \CS\setminus\big(D_+\cup D_-\big)
  \,,
 \end{aligned}
\]
where $\CS$ is defined in~(\ref{Num}), see Figure~\ref{figSub}.
We have indeed
\[
 \mathbb{C}\setminus\big(\mathbb{R}_++i\{-1,1\}\big) = D_+\cup D_-\cup U\cup W\,.
\]

\begin{figure}[h]\begin{center}
\includegraphics[width=0.8\textwidth]{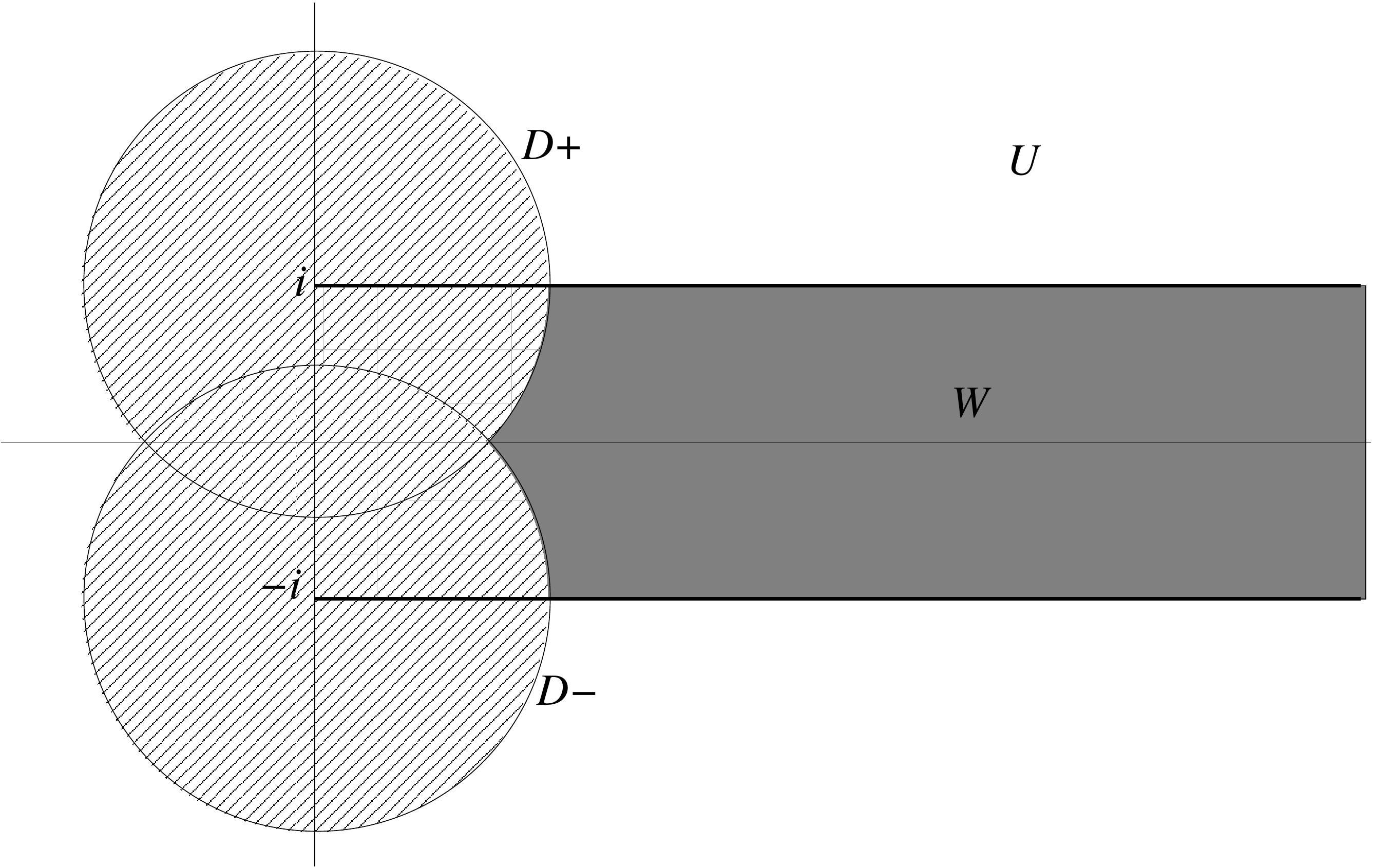}
\caption{The subdomains $D_+$, $D_-$, $U$ and $W$.}\label{figSub}
\end{center}
\end{figure}

First, let us estimate $\sup_{\mathbb{R}^2} |\mathcal{R}_z|$ for $z\in D_+$.
As $z\to i$, we have $k_+(z)\to 0$ and $k_-(z)\to\sqrt{-2i}$.
Thus, there exist positive constants $c_0\,$, $c_1$ and $c_2$ such that, 
for all $z\in D_+$, 
\beq
|k_+(z)+k_-(z)|\geq\frac{1}{c_0}
\,, \qquad
|k_+(z)-k_-(z)|\leq c_1
\,, \qquad
|k_-(z)|\geq\frac{1}{c_2}\,.
\eeq
According to~(\ref{exprKernel}), we then have, 
for all $(x,y)\in\mathbb{R}^2$ such that $xy\leq0$,
\beq\label{estBS+1}
|\mathcal{R}_z(x,y)| \leq \frac{1}{|k_+(z)+k_-(z)|} \leq c_0\,,
\eeq
and, for all $(x,y)\in \big\{x\leq0,y\leq0\big\}$,
\beq\label{estBS+2}
|\mathcal{R}_z(x,y)| \leq \frac{1}{2|k_-(z)|}
\left(1+\frac{|k_+(z)-k_-(z)|}{|k_+(z)+k_-(z)|}\right) 
\leq \frac{c_2}{2}(1+c_0c_1)\,.
\eeq
It remains to check that there is no singularity as $z\rightarrow i$ 
for $x>0\,$, $y>0\,$:
\begin{align}
 |\mathcal{R}_z(x,y)| 
  & = \frac{1}{2|k_+(z)|}\left|e^{-k_+(z)|x-y|} 
  + \Big(-1+\frac{2k_+(z)}{k_+(z)+k_-(z)}\Big)e^{-k_+(z)(|x|+|y|)}\right|
  \nonumber \\
  & \leq  \frac{1}{2|k_+(z)|}\left|e^{-k_+(z)|x-y|} -e^{-k_+(z)(|x|+|y|)}\right| 
  + \frac{1}{|k_+(z)+k_-(z)|}
  \nonumber\\
  & \leq  c_0 + \frac{1}{2|k_+(z)|}\left|\Big(e^{-k_+(z)|x-y|}-1\Big) 
  -\Big(e^{-k_+(z)(|x|+|y|)}-1\Big)\right|
  \nonumber \\
  & \leq c_0 + \frac{|x-y|+|x|+|y|}{2}
  \nonumber\\
  & \leq c_0+|x|+|y|\,,\label{estBS+3}
\end{align}
where we have used the inequality 
$|e^{-\omega}-1|\leq |\omega|$ for $\Re\omega\geq0$.
Using (\ref{estBS+1}), (\ref{estBS+2}) and (\ref{estBS+3}), 
we then get, for all $z\in D_+$,
\begin{align}
  \|K_z\|_\mathrm{HS}^2 
  & \leq \int_{\mathbb{R}^2}|V(x)|\Big(3c_0^2+\frac{c_2^2}{4}(1+c_0c_1)^2 
  + 2\big(|x|+|y|\big)^2\Big)|V(y)| \,dx \, dy 
  \nonumber\\
  & \leq C_+\left(\int_\mathbb{R}(1+|x|^2)|V(x)| \, dx\right)^2\,, 
  \label{estBS+}
\end{align}
with some $C_+>0$.

Similarly, one can check that there exists $C_->0$ such that, for all $z\in D_-$,
\beq\label{estBS-}
  \|K_z\|_\mathrm{HS}^2 
  \leq C_-\left(\int_\mathbb{R}(1+|x|^2)|V(x)| \, dx\right)^2\,.
\eeq

Now let us consider the region~$U$. 
Notice that, as $|z|\rightarrow+\infty$, $z\in U$, we have
\[
  k_+(z)-k_-(z)\longrightarrow 0
  \qquad\mbox{and}\qquad
  k_+(z)\sim k_-(z)\sim \sqrt{-z}
  \,,
\]
hence $|k_++k_-|^{-1}$, $|k_+|^{-1}$, $|k_-|^{-1}$ and $|k_+-k_-|$ 
are uniformly bounded in~$U$. 
Thus, there exists $C_1>0$ such that, for all $z\in U$,
\begin{equation}\label{estBSU}
  \|K_z\|_\mathrm{HS}^2 
  \leq  \|V\|_{L^1(\Real)}^2\sup_{(x,y)\in\mathbb{R}^2}|\mathcal{R}_z(x,y)|^2 
  \leq  C_1\|V\|_{L^1(\Real)}^2\,.
\end{equation}

Finally, for $z\in W$, 
we use the asymptotic expansions~(\ref{equivk}) and~(\ref{equivk+k}). 
In particular, there exist $c_3>0$, $c_4>0$ and $c_5>0$ such that,
for all $z\in W$,
\[
  2|k_\pm(z)|\geq\frac{\sqrt{\Re z}}{c_3}
  \,, \quad
  |k_-(z)-k_+(z)|\leq c_4\sqrt{\Re z}
  \,, \quad
  |k_+(z)+k_-(z)|\geq\frac{1}{c_5\sqrt{\Re z}}
  \,.
\]
Thus, according to~(\ref{exprKernel}), we have
\[
  \sup_{(x,y)\in\mathbb{R}^2}|\mathcal{R}_z(x,y)|
  \leq\frac{c_3}{\sqrt{\Re z}}+c_3c_4c_5\sqrt{\Re z} \leq \sqrt{C_2\Re z}
\]
for some $C_2>0$, hence
\beq\label{estBSV}
  \|K_z\|_\mathrm{HS}^2 \leq C_2 \, \Re z \, \|V\|_{L^1(\Real)}^2
  \,.
\eeq

Gathering (\ref{estBS+}), (\ref{estBS-}), (\ref{estBSU}) and (\ref{estBSV}), 
we obtain, for all $z\in\mathbb{C}\setminus\big(\mathbb{R}_++i\{-1,+1\}\big)$, 
\beq\label{estBS1}
  \|K_z\|_\mathrm{HS}^2 
  \leq \max\Big(\max(C_+,C_-,C_1) \big\|(1+|\cdot|^2)V\big\|_{L^1(\Real)}^2\,, 
  C_2 \, \Re z \, \|V\|_{L^1(\Real)}^2\Big)
  \,,
\eeq
and more precisely when $z\notin \CS$, 
\[
 \|K_z\|_\mathrm{HS}^2 \leq \max(C_+,C_-,C_1)
  \big\|(1+|\cdot|^2)V\big\|_{L^1(\Real)}^2\,\,.
\]
In particular, 
if $\|(1+|\cdot|^2)V\|_{L^1(\Real)}^2<\max(C_+,C_-,C_1)^{-1}$ 
and either $z\notin\CS$ or $\Re z<(C_2\,\|V\|_{L^1(\Real)}^2)^{-1}$, 
then $\|K_z\|_\mathrm{HS}<1$ and~$-1$ cannot be 
in the spectrum of~$K_z$. 
After the replacement $V \mapsto \eps V$,
we therefore get Theorem~\ref{thmSpecBound1} 
as a consequence of Theorem~\ref{thmBirmanSchwinger}.
\hfill\qed

\subsection{Proof of Theorem~\ref{thmSpecBound2}}\label{ssBS2}
Let~$V$ satisfy the assumptions of Theorem~\ref{thmSpecBound2}
with $n\geq2$ and $\varepsilon>0$.
The present proof is again based on Theorem~\ref{thmBirmanSchwinger},
but we use a more sophisticated estimate of the norm of~$K_z$
for which the extra regularity hypotheses are needed.

The first step in our proof is to isolate the singular part 
of the kernel~$\mathcal{K}_z$.
The idea comes back to~\cite{Si}, where the singularity of
the free resolvent $(-\Delta-z)^{-1}$ at $z=0$ is singled out.
In the present setting, however,
the resolvent $(H-z)^{-1}$ is rather singular as $\Re z\rightarrow+\infty$.
In other words, we want to find a decomposition of the form
\beq\label{decompK}
  K_z = L_z+M_z
  \,,
\eeq
where $\|L_z\|\to+\infty$ as $\Re z\to+\infty$, 
while $M_z$ stays uniformly bounded with respect to $z$. 
The integral kernels of~$L_z$ and~$M_z$ 
will be denoted by~$\mathcal{L}_z$ and~$\mathcal{M}_z$, respectively. 

Notice that it is enough to consider $z\in\CS$ since,
according to Theorem~\ref{thmSpecBound1}, 
every eigenvalue of~$H_\eps$ belongs to 
the half-strip~$\CS$ provided that~$\varepsilon$ is small enough.

In this paper, 
motivated by the asymptotic expansions~\eqref{expansion},
we use the decomposition~\eqref{decompK} 
with the singular part~$L_z$ given by the integral kernel
\beq\label{defL}
  \mathcal{L}_z(x,y) := 
  \sqrt{\Re z} \, |V|^{1/2}(x) \, e^{-i\sqrt{\Re z}\,(x+y)} \, V_{1/2}(y)
  \,.
\eeq
Properties of~$M_z$ are then stated in the following lemma.

\begin{lemma}\label{lemDecompK}
For all $z\in\CS$ and $(x,y)\in\mathbb{R}^2$, 
the integral kernel of the operator~$M_z$ defined by~\eqref{decompK}
with~\eqref{defL} satisfies
\beq\label{defM}
  \mathcal{M}_z(x,y) 
  = \frac{1}{2} |V|^{1/2}(x) e^{-i\sqrt{\Re z}\,(x+y)}
  \big[\Im z \, (x+y)-(|x|+|y|)\big]V_{1/2}(y) + m_z(x,y)
  \,,
\eeq
where for some $k>0\,$, the function $m_z$ satisfies, 
for all $z\in\mathcal{S}$ such that $\Re z\geq1\,$,
\beq\label{def.m}
  |m_z(x,y)|\leq \frac{k}{\sqrt{\Re z}} \, |V|^{1/2}(x)\,(1+x^2+y^2)\,|V|^{1/2}(y)
  \,.
\eeq
If $V\in L^1\big(\mathbb{R},(1+x^4)\,dx\big)$, 
then $\|M_z\|_\mathrm{HS}$ is uniformly bounded with respect to $z\in\CS$.
\end{lemma}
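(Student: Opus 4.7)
The plan is to perform a careful two-term asymptotic expansion of $\mathcal{R}_z(x,y)$ as $\tau := \Re z \to +\infty$ with $z \in \CS$. The leading $O(\sqrt{\tau})$ contribution will reproduce the kernel \eqref{defL} of $L_z$, the $O(1)$ correction will yield the explicit middle piece of $\mathcal{M}_z$ in \eqref{defM}, and every further contribution will be $O(\tau^{-1/2})$ multiplied by a polynomial of degree at most two in $(x,y)$, giving the remainder bound \eqref{def.m}.

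Concretely, first I would refine \eqref{expansion} to
\begin{equation*}
  k_\pm(z) = \pm i\sqrt{\tau} + \frac{1\mp\delta}{2\sqrt{\tau}} + r_\pm(z),\qquad |r_\pm(z)|\leq C\,\tau^{-3/2},
\end{equation*}
uniformly for $z\in\CS$ with $\tau \geq 1$ and $\delta := \Im z \in (-1,1)$. From these the prefactors in \eqref{exprKernel} expand as $1/(k_+(z)+k_-(z)) = \sqrt{\tau} + O(\tau^{-1/2})$ and $(k_+(z)-k_-(z))/[2k_\pm(z)(k_+(z)+k_-(z))] = \pm\sqrt{\tau} + O(\tau^{-1/2})$, whereas $1/(2k_\pm(z)) = O(\tau^{-1/2})$; combined with $|e^{-k_\pm(z)|x-y|}|\leq 1$, the first exponential piece in the same-sign case of \eqref{exprKernel} is absorbable directly into the remainder $m_z$.

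Next, I would work case by case on the four sign regions for $(x,y)$ prescribed by \eqref{exprKernel}. In each case the full exponent in the surviving exponential takes the form $-i\sqrt{\tau}(x+y) + \tau^{-1/2}A(x,y) + O(\tau^{-3/2})(|x|+|y|)$ with $A$ linear in $(x,y)$; the cancellations between $\pm|x|$, $\pm|y|$ and the odd parts of $k_\pm(z)$ conspire to give $A(x,y) = \frac{1}{2}[\delta(x+y) - (|x|+|y|)]$ universally across regions. A first-order Taylor expansion of the exponential then yields
\begin{equation*}
  e^{\mathrm{(exponent)}} = e^{-i\sqrt{\tau}(x+y)}\Bigl[1 + \frac{A(x,y)}{\sqrt{\tau}} + O(\tau^{-1})(1+x^2+y^2)\Bigr],
\end{equation*}
and multiplication by the expanded prefactor $\sqrt{\tau}+O(\tau^{-1/2})$ (or its counterpart in the same-sign case) delivers the decomposition \eqref{decompK}--\eqref{defM} with the announced explicit middle term; every remaining contribution is bounded by $C\,\tau^{-1/2}|V|^{1/2}(x)(1+x^2+y^2)|V|^{1/2}(y)$, which is precisely \eqref{def.m}.

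Finally, for the Hilbert--Schmidt bound on $M_z$ under $V \in L^1(\mathbb{R},(1+x^4)\,dx)$, the explicit middle piece in \eqref{defM} is dominated in modulus by $|V|^{1/2}(x)(|x|+|y|)|V|^{1/2}(y)$ (using $|\delta|<1$), so its contribution to $\|M_z\|_{\mathrm{HS}}^2$ is controlled by $2\|V\|_{L^1(\mathbb{R})}\|(1+x^2)V\|_{L^1(\mathbb{R})}$; the remainder $m_z$ contributes at most a multiple of $\tau^{-1}\|(1+x^4)V\|_{L^1(\mathbb{R})}\|V\|_{L^1(\mathbb{R})}$, and both bounds are uniform in $z$. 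The main obstacle I expect is the case-by-case bookkeeping: one must verify that the $O(1)$ correction coming from the single-exponential mixed-sign case of \eqref{exprKernel} matches, after all prefactor and exponent expansions, the $O(1)$ correction arising from the two-exponential same-sign case, where the second term provides the leading asymptotics while the first term, although $O(\tau^{-1/2})$ pointwise, must be tracked to ensure it only enters the remainder $m_z$ and not the explicit middle piece.
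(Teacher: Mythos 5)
Your decomposition and expansion strategy matches the paper's proof essentially step for step: factor $e^{-i\sqrt{\Re z}(x+y)}$ out of each piece of \eqref{exprKernel}, expand the prefactors and the residual exponent to second order in $\tau^{-1/2}$, and identify the linear term $\tfrac12[\Im z\,(x+y)-(|x|+|y|)]$. Two load-bearing details are missing, however. When you write $e^{(\mathrm{residual\ exponent})}=1+A(x,y)/\sqrt\tau+O(\tau^{-1})(1+x^2+y^2)$ you are using a Lagrange-type remainder of the form $|e^\Lambda-1-\Lambda|\lesssim|\Lambda|^2\sup_{t\in[0,1]}|e^{t\Lambda}|$; but $|\Lambda|\lesssim\tau^{-1/2}(|x|+|y|)$ is \emph{not} uniformly small in $(x,y)$, so the supremum is only controlled if $\Re\Lambda\leq0$, so that $|e^{t\Lambda}|\leq1$. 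This sign condition does hold region by region (because $\Re k_\pm>0$ and the signs of $x$ and $y$ in each case force $\Re\Lambda\leq0$), but it has to be checked explicitly; the paper formulates the Taylor step (via real and imaginary parts) precisely so as to invoke this fact, and without it the $O(\tau^{-1})(1+x^2+y^2)$ estimate does not follow.

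Second, your expansions and the resulting bound \eqref{def.m} are only asserted for $\Re z\geq1$, which is consistent with the lemma. But the final Hilbert--Schmidt claim is uniformity over all of $\CS$, and on $\{0\leq\Re z<1\}\cap\CS$ your estimate for $m_z$ degenerates like $\tau^{-1}$ and says nothing. A separate argument is needed on this bounded region: the paper uses $\|M_z\|_{\mathrm{HS}}\leq\|K_z\|_{\mathrm{HS}}+\|L_z\|_{\mathrm{HS}}$, bounding $\|K_z\|_{\mathrm{HS}}$ via \eqref{estBS1} and $\|L_z\|_{\mathrm{HS}}$ directly from the explicit rank-one kernel in \eqref{defL}. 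Both gaps are straightforward to fill but must be filled; once they are, your argument coincides with the paper's.
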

\begin{proof}
In the following computations we assume $\Re z\geq1\,$.

First, let $x\geq0$ and $y\leq0$. 
Then, according to~(\ref{exprKernel}) 
and the asymptotic behaviour of $k_+(z)$ and $k_-(z)$ given in~(\ref{expansion}),
\begin{equation*}
  \mathcal{R}_z(x,y) = \frac{1}{k_+(z)+k_-(z)} \, e^{-k_+(z)\,x+k_-(z)\,y}
  = e^{-k_+(z)\,x+k_-(z)\,y}\big(\sqrt{\Re z} + \delta_1(z)\big)
  \,,
\end{equation*}
where $\delta_1(z)$ does not depend on $(x,y)$ and 
$
  \delta_1(z) = \mathcal{O}(1/\sqrt{\Re z})
$.
Thus,
\begin{align}
  \mathcal{M}_z(x,y) 
  = \ & \sqrt{\Re z}\,|V|^{1/2}(x) \, e^{-i\sqrt{\Re z}\,(x+y)}
  \left(e^{\Lambda_z(x,y)}-1\right)V_{1/2}(y) 
  \nonumber \\
  & + \delta_1(z) \, |V|^{1/2}(x) \, e^{-k_+(z)\,x+k_-(z)\,y} \, V_{1/2}(y)
  \,, \label{decompKinter}
\end{align}
where
\[
  \Lambda_z(x,y) := 
  \left(-k_+(z)+i\sqrt{\Re z}\right)x 
  + \left(k_-(z)+i\sqrt{\Re z}\right)y
  \,.
\]
Writing a Taylor expansion for the two real-valued functions
\[
  [0,1]\ni t\longmapsto\Re e^{t\Lambda_z(x,y)}
  \qquad\mbox{and}\qquad
  [0,1]\ni t\longmapsto\Im e^{t\Lambda_z(x,y)}
  \,,
\]
we obtain that, for some $t_1,t_2\in[0,1]$, 
\beq\label{Lambda}
  e^{\Lambda_z(x,y)}-1 
  = \Lambda_z(x,y) 
  + \frac 12\Big[\Re\big(\Lambda_z(x,y)^2e^{t_1\Lambda_z(x,y)}\big)
  +i\,\Im\big(\Lambda_z(x,y)^2e^{t_2\Lambda_z(x,y)}\big)\Big]\,.
\eeq
Notice that, for all $z\in\CS$, 
$x\geq0$ and $y\leq0$, 
$\Re \Lambda_z(x,y)\leq0$, 
hence 
\beq\label{LambdaBis}
  \frac 12\Big|\Re\big(\Lambda_z(x,y)^2e^{t_1\Lambda_z(x,y)}\big)
  +i\,\Im\big(\Lambda_z(x,y)^2e^{t_2\Lambda_z(x,y)}\big)\Big|\leq|\Lambda_z(x,y)|^2
  \,.
\eeq
%
Moreover, due to (\ref{expansion}), we have
\[
  \Lambda_z(x,y) = \frac{(\Im z-1)\,x+(\Im z +1)\,y}{2\sqrt{\Re z}} 
  + \frac{\beta_z\,x+\gamma_z\,y}{(\Re z)^{3/2}}\,,
\]
for some complex constants $\beta_z$ and $\gamma_z$ independent of $(x,y)$ 
and uniformly bounded with respect to~$z$.
As a consequence, (\ref{Lambda}) and (\ref{LambdaBis})~yield
\[
  e^{\Lambda_z(x,y)}-1 
  = \frac{1}{\sqrt{\Re z}}\left(\frac{(\Im z-1)\,x+(\Im z +1)\,y}{2} 
  + \delta_2(z ;x,y)\right) ,
\]
where, for all $z\in\CS$, $x\geq0$ and $y\leq0$, 
\[
 |\delta_2(z ;x,y)| \leq C_0\,\frac{1+x^2+y^2}{\sqrt{\Re z}}\,,
\]
with some $C_0>0$.
Summing up, (\ref{decompKinter})~reads
\beq\label{decompM}
  \mathcal{M}_z(x,y) = 
  |V|^{1/2}(x) \left(\tilde{\mathcal{M}}_z^0(x,y) + r_z(x,y)\right) V_{1/2}(y)
  \,,
\eeq
where ($x\geq0$, $y\leq0$)
\begin{align}
  \tilde{\mathcal{M}}_z^0(x,y)
  := \ & \frac{1}{2} \, e^{-i\sqrt{\Re z}\,(x+y)} \, 
  \big[(\Im z-1)\,x+(\Im z+1)\,y\big] 
  \nonumber\\
  = \ &  \frac{1}{2} \, e^{-i\sqrt{\Re z}\,(x+y)} \, 
  \big[\Im z \, (x+y)-(|x|+|y|)\big]
  \label{exprM0+-}
\end{align}
and
\beq\label{exprRem}
  r_z(x,y) := 
  e^{-i\sqrt{\Re z}\,(x+y)} \, \delta_2(z ;x,y) 
  + e^{-k_+(z)\,x+k_-(z)\,y} \, \delta_1(z)
\eeq
satisfies, with some positive constant $C$,
\beq\label{boundR}
  \forall z\in\CS\,, \ x\geq0\,, \ y\leq0 \,, \quad
  |r_z(x,y)|\leq \frac{C}{\sqrt{\Re z}} \, (1+x^2+y^2)
  \,.
\eeq

By a similar analysis, 
we get the decomposition of the form~(\ref{decompM}) 
for $x\leq0$ and $y\geq0$ as well, where ($x\leq0$, $y\geq0$)
\begin{align}
  \tilde{\mathcal{M}}_z^0(x,y) 
  := \ & \frac{1}{2}e^{-i\sqrt{\Re z}\,(x+y)}
  \big[(\Im z+1)\,x+(\Im z-1)\,y\big] 
  \nonumber\\
  = \ & \frac{1}{2}e^{-i\sqrt{\Re z}\,(x+y)}
  \big[\Im z \, (x+y)-(|x|+|y|)\big]
  \label{exprM0-+}
\end{align}
and the bound (\ref{boundR}) holds also for $x\leq0$, $y\geq0$.

The case $xy\geq0$ can also be treated alike,
by noticing that in this case the first term 
on the right-hand side of~(\ref{exprKernel}) satisfies
\[
 \left|\frac{1}{2k_\pm(z)} \, e^{-k_\pm(z)|x-y|}\right|\leq \frac{C'}{\sqrt{\Re z}}
\]
with some $C'>0$. 
Moreover, using~(\ref{expansion}),
\begin{multline*}
  \pm\frac{k_+(z)-k_-(z)}{2k_\pm(z)\big(k_+(z)+k_-(z)\big)}
  \, e^{-k_\pm(z)(|x|+|y|)}-\sqrt{\Re z}\,e^{-i\sqrt{\Re z}\,(x+y)} 
  \\
  = \frac{1}{2} \, e^{-i\sqrt{\Re z}\,(x+y)} \,
  \big[\Im z\,(x+y)-(|x|+|y|)\big] + \rho_z(x,y)
  \,,
\end{multline*}
where $\rho_z(x,y)$ satisfies the bound~(\ref{boundR}). 
The decomposition~(\ref{defM}) with~\eqref{def.m} is therefore proved.

To complete the proof of the lemma, 
it remains to prove the uniform boundedness of~$\mathcal{M}_z$.
This can be deduced from~(\ref{defM}) and~\eqref{def.m}.
Indeed, with some $C_1>0$, we have, for $\Re z\geq1$,
\begin{equation*}
  \|M_z\|_\mathrm{HS}^2 
  \leq C_1 \int_{\mathbb{R}^2}|V(x)| \, (1+x^2+y^2)^2 \, |V(y)| \,dx\,dy 
  \,,
\end{equation*}
where the right hand side is finite if $V\in L^1(\mathbb{R},(1+x^4)\,dx)$
and actually independent of~$z$.
If $\Re z\leq1$, 
then according to~(\ref{estBS1}) and the expression~(\ref{defL}) 
of the kernel $\mathcal{L}_z$, we have
\[
  \|M_z\|_\mathrm{HS}\leq \|K_z\|_\mathrm{HS} 
  + \|L_z\|_\mathrm{HS}\leq C_2 \,
  \sqrt{\int_{\mathbb{R}^2}|V(x)| \, (1+|x|+|y|)^2 \, |V(y)| \,dx\,dy}
\]
with some $C_2>0$, 
hence the norm $\|M_z\|_\mathrm{HS}$ is uniformly bounded for $\Re z\leq 1$ as well.
\end{proof}
\begin{Remark}
Using a first-order expansion in~(\ref{Lambda}) instead of the second-order expansion, 
we would obtain the uniform boundedness of~$M_z$ 
under the weaker assumption $V\in L^1\big(\mathbb{R},(1+x^2)\,dx)$. 
However, the second-order expansion in~(\ref{Lambda}) is required 
in order to get the exact expression~(\ref{exprM0+-}) 
of the principal term $\tilde{\mathcal{M}}_z^0(x,y)$ in~(\ref{decompM}).
\end{Remark}

Since~$\|M_z\|$ is uniformly bounded with respect to $z \in \CS$, 
the operator $(1+\varepsilon M_z)$ is boundedly invertible 
for all~$\varepsilon$ small enough.
Consequently, in view of the identity 
\[
  \varepsilon K_z+1 
  = \varepsilon (L_z+M_z)+1 
  = (1+\varepsilon M_z)\big[\varepsilon(1+\varepsilon M_z)^{-1} L_z+1\big]
\]
and Theorem~\ref{thmBirmanSchwinger}, we have (for all $z \in \CS$)
\beq\label{equivspec}
  z\in\sigma_\mathrm{p}(H_\eps)
  \quad\Longleftrightarrow\quad 
  -1\in\sigma\big(\varepsilon(1+\varepsilon M_z)^{-1} L_z\big)
  \,.
\eeq

From the definition~(\ref{defL}) we see that~$L_z$ is a rank-one operator.
Consequently, $\varepsilon(1+\varepsilon M_z)^{-1} L_z$ is of rank one too.  
Indeed, for all $f\in L^2(\mathbb{R})$, we have
\[
  \varepsilon(1+\varepsilon M_z)^{-1} L_z f 
  = \varepsilon\sqrt{\Re z} \, (f,\bar\psi_z) \, (1+\varepsilon M_z)^{-1}\phi_z
  \,,
\]
where
\[  
  \phi_z(x) := e^{-i\sqrt{\Re z}\,x} \, |V|^{1/2}(x)
  \qquad\mbox{and}\qquad
  \psi_z(x) := e^{-i\sqrt{\Re z}\,x} \, V_{1/2}(x)
  \,.
\]
It follows that $\varepsilon(1+\varepsilon M_z)^{-1} L_z$
has the unique non-zero eigenvalue
\[
  \varepsilon\sqrt{\Re z} \, 
  \left((1+\varepsilon M_z)^{-1}\phi_z,\bar\psi_z\right)
  .
\]
Equivalence~(\ref{equivspec}) thus reads
\beq\label{equivspec2}
  z\in\sigma_\mathrm{p}(H_\eps)
  \quad\Longleftrightarrow\quad 
  -1 = \varepsilon\sqrt{\Re z} \, 
  \left((1+\varepsilon M_z)^{-1}\phi_z,\bar\psi_z\right)
  \,.
\eeq
Note that the right hand side represents 
an implicit equation for~$z$.

Writing
\[
  (1+\varepsilon M_z)^{-1} 
  = \sum_{j=0}^{n-1}(-1)^j\varepsilon^j M_z^j 
  + (-1)^{n}\varepsilon^{n} M_z^n(1+\varepsilon M_z)^{-1}
  \,,
\]
the condition on the right hand side of~(\ref{equivspec2}) reads
\beq\label{EigenEquation}
  \frac{1}{\sqrt{\Re z}} 
  = \sum_{j=1}^n(-1)^j \left(M_z^{j-1}\phi_z,\bar\psi_z\right) 
  \varepsilon^j 
  + (-1)^{n+1}
  \left(M_z^n(1+\varepsilon M_z)^{-1}\phi_z,\bar\psi_z\right)
  \varepsilon^{n+1}
  \,.
\eeq
In the following we estimate each term 
on the right hand side of~(\ref{EigenEquation}).

For $j=1,\dots,n\,$, denoting
$$
 V^{\otimes j}(x_1,\dots,x_j) := V(x_1)\dots V(x_j)\,,
$$
and using the decomposition \eqref{decompKinter} with \eqref{exprM0-+}, 
we have 
\begin{align}
  \left(M_z^{j-1}\phi_z,\bar\psi_z\right) 
  =\ & \int_{\mathbb{R}^j}\mathcal{M}_z(x_1,x_2)\dots\mathcal{M}_z(x_{j-1},x_j)
  \,\phi_z(x_j)\,\psi_z(x_1) 
  \, dx_1\dots dx_j 
  \nonumber \\
  =\ & \int_{\mathbb{R}^j}
  \left(
  \prod_{\ell = 1}^{j-1}
  |V|^{1/2}(x_\ell)\big[\tilde{\mathcal{M}}_z^0(x_\ell,x_{\ell+1})
  +r_z(x_\ell,x_{\ell+1})\big]V_{1/2}(x_{\ell+1})
  \right) 
  \nonumber \\
  & \times |V|^{1/2}(x_j) \, e^{-i\sqrt{\Re z}\,(x_1+x_j)} 
  \, V_{1/2}(x_1) \, dx_1\dots dx_j 
  \nonumber \\
  =\ & \int_{\mathbb{R}^j}e^{-i\sqrt{\Re z}\,(x_1+x_j)} \, V^{\otimes j}(x_1,\dots,x_j) 
  \nonumber \\
  & \times \prod_{\ell=1}^{j-1}
  \big[\tilde{\mathcal{M}}_z^0(x_\ell,x_{\ell+1})+r_z(x_\ell,x_{\ell+1})\big] 
  dx_1\dots dx_j 
 \nonumber \\
  =\ & I_{j-1}(z)+R_{j-1}(z)
  \,, \label{decompI+R}
\end{align}
where
\begin{multline}
  I_{j-1}(z)  := 
  \frac{1}{2^{j-1}}\int_{\mathbb{R}^j}e^{-2i\sqrt{\Re z} \, \sum_{\ell=1}^jx_\ell} \,
  V^{\otimes j}(x_1,\dots,x_j) 
  \\
  \times\prod_{\ell = 1}^{j-1} 
  \big[\Im z\,(x_\ell+x_{\ell+1})-(|x_\ell|+|x_{\ell+1}|)\big]
  dx_1\dots dx_j \label{exprIj}
\end{multline}
and  
$
 R_{j-1}(z) := (M_z^{j-1}\phi_z,\bar\psi_z)-I_{j-1}(z)
$
contains all the integral terms involving at least one factor 
of the form $r_z(x_\ell,x_{\ell+1})$. 
Using~(\ref{boundR}), one can easily check that
\beq\label{estRj}
 R_{j-1}(z) = \mathcal{O}\left(\frac{1}{\sqrt{\Re z}}\right)
\eeq
whenever $V\in L^1(\mathbb{R}, (1+x^{2n})\,dx)$.

On the other hand, we have
\[
  \prod_{\ell = 1}^{j-1}
  \big[\Im z \, (x_\ell+x_{\ell+1})-(|x_\ell|+|x_{\ell+1}|)\big] 
  = \sum_{\vec\ell\in \mathcal{J}_{j-1}}
  \prod_{m=1}^{j-1}(\Im z\,x_{\ell_m}-|x_{\ell_m}|)
  \,,
\]
for a subset $\mathcal{J}_{j-1}\subset \{1,\dots,j\}^{j-1}$ such that, 
for all $\vec\ell\in\mathcal{J}_{j-1}$, 
each coordinate in $\vec\ell$ is repeated at most twice. 
Consequently, separating the variables in (\ref{exprIj}), 
we get, for some positive integer $M_j$, 
\beq\label{exprIFourier}
 I_{j-1}(z) = \frac{1}{2^{j-1}}\sum_{k=1}^{M_j}I_{j-1}^{(k)}(z)\,,
\eeq
where each term $I_{j-1}^{(k)}(z)$ has the form
\begin{eqnarray*}
  I_{j-1}^{(k)}(z) &=& 
  \left(\int_\mathbb{R}e^{-2i\sqrt{\Re z}\,x} \, V(x) \, dx\right)^{a_{k,j}}
  \\
  &&\times 
  \left(\int_\mathbb{R}e^{-2i\sqrt{\Re z}\,x} (\Im z\,x-|x|) \, V(x) \, dx
  \right)^{b_{k,j}}  
  \\
  &&\times
  \left(\int_\mathbb{R}e^{-2i\sqrt{\Re z}\,x} \, 
  (\Im z\,x-|x|)^2V(x) \, dx\right)^{c_{k,j}}
  \,,
\end{eqnarray*}
with $a_{k,j},b_{k,j},c_{k,j}$ such that
\[
 \left\{
 \begin{array}{l}
  a_{k,j}>0 \,, \ b_{k,j}\geq0 \,, \ c_{k,j}\geq0\,,\\
  a_{k,j}+b_{k,j}+c_{k,j}=j\,,\\
  b_{k,j}+2c_{k,j}=j-1\,.
 \end{array}
\right.
 \]
Thus, 
if $\mathcal{F}[f](\xi)$ denotes the Fourier transform of~$f$ at point~$\xi$, 
we have
\begin{eqnarray}
 I_{j-1}^{(k)}(z) &=&\left(\mathcal{F}[V](2\sqrt{\Re z})\right)^{a_{k,j}}
  \left(\mathcal{F}\big[(\Im z\,x-|x|)V(x)\big](2\sqrt{\Re z})\right)^{b_{k,j}} 
  \nonumber\\
  &&\times\left(
  \mathcal{F}\big[(\Im z\,x-|x|)^2V(x)\big](2\sqrt{\Re z})
  \right)^{c_{k,j}}
 \,. \label{exprIFourierBis}
\end{eqnarray}

Now, since for $s=1,2$ the function $x\mapsto (\Im z\,x-|x|)^sV(x)$
belongs to $L^1(\mathbb{R})$ by assumption, 
its Fourier transform is in $L^\infty(\mathbb{R})$ and it is continuous. 
Hence there exists $M_1>0$ such that, for all $z\in\CS$ and $s=1,2$,
\[
  \left|\mathcal{F}\big[(\Im z\,x-|x|)^s V(x)\big](2\sqrt{\Re z})\right|
  \leq M_1
  \,.
\]
Similarly, since $V\in W^{1,1}(\mathbb{R})$, 
the function $\xi\mapsto\xi\,\mathcal{F}[V](\xi)$ 
belongs to $L^\infty(\mathbb{R})$ and it is continuous.
Hence there exists $M_2>0$ such that, for all $z\in\CS$, 
\[
  \left|\mathcal{F}[V](2\sqrt{\Re z})\right|\leq\frac{M_2}{\sqrt{\Re z}}
  \,.
\]
Thus~(\ref{exprIFourier}) and~(\ref{exprIFourierBis}) give
\beq\label{estIj}
 I_{j-1}(z) = \mathcal{O}\left(\frac{1}{\sqrt{\Re z}}\right)\,.
\eeq

Finally, (\ref{decompI+R}), (\ref{estRj}) and (\ref{estIj}) yield
\[
  \left(M_z^{j-1}\phi_z,\bar\psi_z\right)
  = \mathcal{O}\left(\frac{1}{\sqrt{\Re z}}\right)
\]
for all $j=1,\dots,n$.
Thus, according to~(\ref{EigenEquation}),
\[
  \frac{1}{\sqrt{\Re z}} \, \big(1-\mathcal{O}(\varepsilon)\big) 
  = (-1)^{n+1}
  \left(M_z^n(1+\varepsilon M_z)^{-1}\phi_z,\bar\psi_z\right)
  \varepsilon^{n+1}
  \,,
\]
uniformly with respect to $z$ as $\varepsilon\rightarrow0$.
We then notice that the right hand side in the above identity 
has the form $\mathcal{O}(\varepsilon^{n+1})$, 
uniformly with respect to $z$, as $\varepsilon\rightarrow0$. 
Therefore, we have
\[
  \frac{1}{\sqrt{\Re z}} = \mathcal{O}(\varepsilon^{n+1})
  \,,
\]
which concludes the proof of Theorem \ref{thmSpecBound2}.
\hfill\qed

\section{Examples}\label{Sec.Ex}
%

\subsection{Dirac interaction}
In order to test our results on an explicitly solvable model,
let us consider the operator formally given by the expression
$$
  H_\alpha = -\frac{d^2}{dx^2} + i \sgn(x) + \alpha \, \delta(x)
  \,, \qquad
  \alpha \in \Com
  \,,
$$
where~$\delta$ is the Dirac delta function.   
In fact, $H_\alpha$~can be rigorously defined 
(\cf~\cite[Ex.~5.27]{KS-book})
as the m-sectorial operator in $\sii(\Real)$
associated with the form sum $h+\alpha v$, where
$$
  v(\psi,\phi) := \psi(0)\bar{\phi}(0)
  \,, \qquad
  \Dom(v) := W^{1,2}(\Real)
  \,.
$$
We have
$$
\begin{aligned}
  (H_\alpha\psi)(x) &= -\psi''(x) + i \sgn(x)\,\psi(x) 
  \quad \mbox{for a.e. } x \in \Real
  \,, 
  \\
  \Dom(H_\alpha) &= 
  \left\{
  \psi \in W^{1,2}(\Real) \cap W^{2,2}(\Real\setminus\{0\})
  : \
  \psi'(0^+) - \psi'(0^-) = \alpha \, \psi(0)
  \right\}
  \,.
\end{aligned}
$$
It is also possible to show that~$H_\alpha$ is $\mathcal{T}$-self-adjoint.

Using for instance \cite[Corol.~IX.4.2]{Edmunds-Evans},
we have the stability result
$$
  \sigma_\mathrm{ess}(H_\alpha) = \sigma_\mathrm{ess}(H)
  = [0,+\infty) + i \, \{-1,+1\}
$$
for all $\alpha \in \Com$.
Since~$H_\alpha$ is $\mathcal{T}$-self-adjoint,
the residual spectrum of~$H_\alpha$ is empty (\cf~\cite[Sec.~5.2.5.4]{KS-book}).
Finally, the eigenvalue problem for~$H_\alpha$ 
can be solved explicitly and we find that $H_\alpha$~possesses
a unique (discrete) eigenvalue given by 
\begin{equation}\label{ev.delta}
  \lambda(\alpha) := \frac{1}{\alpha^2} - \frac{\alpha^2}{4} 
\end{equation}
if, and only if, 
\begin{equation}\label{condAlpha}
\lambda(\alpha) \not\in [0,+\infty) + i \, \{-1,+1\}
\,.
\end{equation}
In particular, the eigenvalue exists for all 
$\alpha \in \Real \setminus\{0\}$ and in this case it is real. 
It is interesting that the rate at which $\lambda(\alpha)$
tends to infinity as $\alpha \to 0$ coincides 
with the bound of Theorem~\ref{thmSpecBound1},
even if this theorem does not apply to the present singular potential
and even for non-real~$\alpha$.

Now, in order to state the condition~(\ref{condAlpha}) 
more explicitly in terms of $\alpha$, let us set, 
for all $\sigma=(\sigma_1,\sigma_2,\sigma_3)\in\{-1,+1\}^3$,
$$
 \Gamma_\sigma 
  :=
  \left\{ \sigma_1\sqrt{-2(r+i\sigma_2) + 2 \sigma_3\sqrt{r(r+2i\sigma_2)}}
  \, : \ r\in[0,+\infty)\right\}
 \,.
$$
Notice that, for all $r\in[0,+\infty)$, 
the square roots in the expression above are well defined.
Then the condition~\eqref{condAlpha} is equivalent to $\alpha\notin\Gamma$, 
where
\begin{equation}\label{defDomGamma}
\Gamma := \bigcup_{\sigma\in\{-1,+1\}^3}\Gamma_\sigma\,.
\end{equation}
The curve~$\Gamma$ is represented in Figure~\ref{figGamma}.

\begin{figure}[H]\begin{center}
\includegraphics[width=0.7\textwidth]{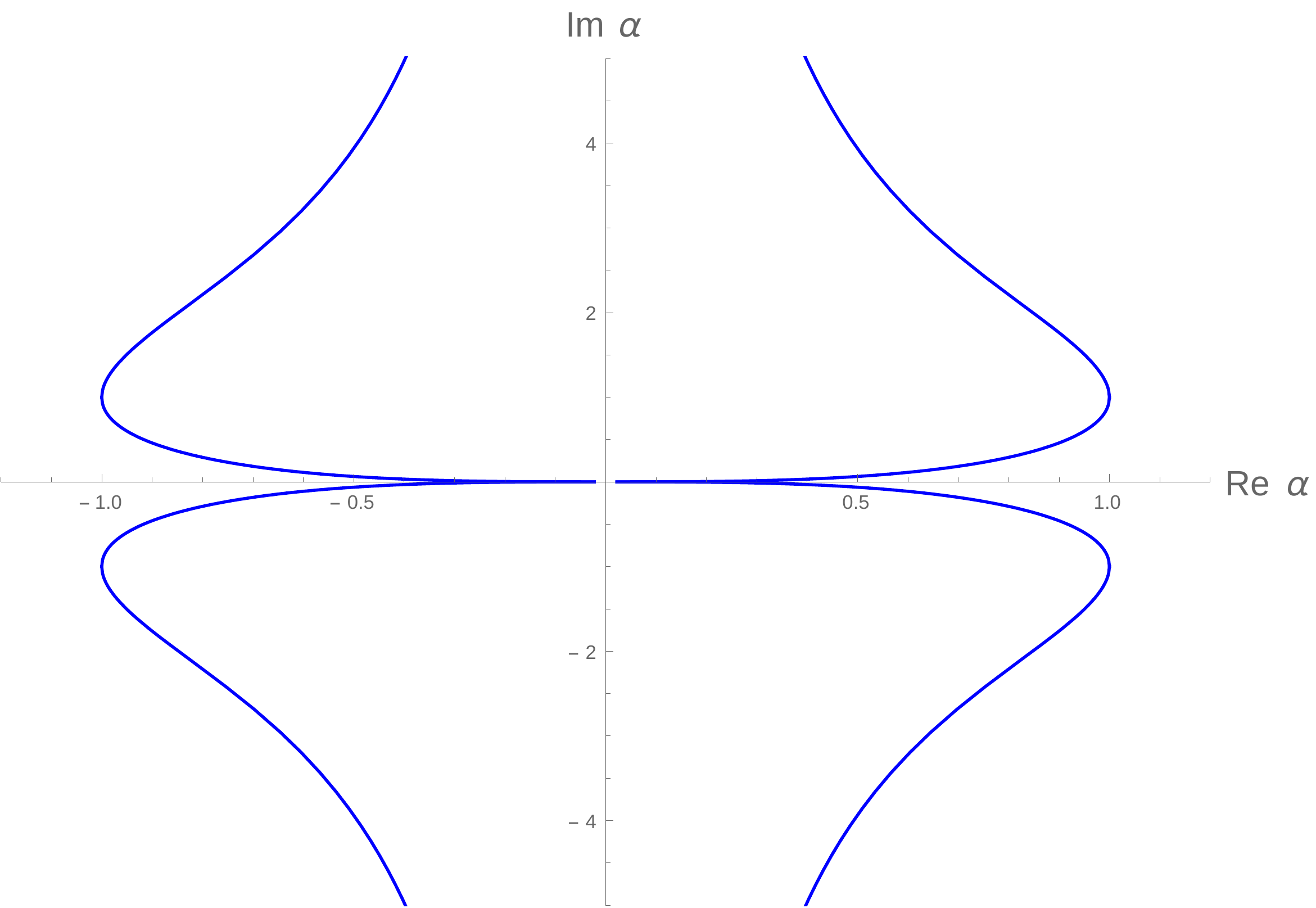}
\caption{{The curve~$\Gamma$ in the complex plane
representing values of~$\alpha$ 
for which the eigenvalue of~$H_\alpha$
does not exist.}}\label{figGamma}
\end{center}
\end{figure}

Let us summarise the spectral properties into the following proposition.
\begin{proposition}
For any $\alpha \in \Com$, we have
$$
\begin{aligned}
  \sigma_\mathrm{r}(H_\alpha) &= \varnothing \,,
  \\ 
  \sigma_\mathrm{c}(H_\alpha) &= [0,+\infty) + i \, \{-1,+1\} \,,	
  \\
  \sigma_\mathrm{p}(H_\alpha) &=
  \begin{cases}
    \varnothing & \mbox{if} \quad 
    \alpha \in \Gamma \,,
    \\
    \{\lambda(\alpha)\} & \mbox{if} \quad 
   \alpha \not\in \Gamma \,,
  \end{cases}
\end{aligned}
$$
where~$\lambda(\alpha)$ is given by~\eqref{ev.delta} 
and~$\Gamma$ is the domain defined in~\eqref{defDomGamma}.
\end{proposition}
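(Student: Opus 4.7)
The plan is to derive the three claims separately. For $\sigma_\mathrm{r}(H_\alpha)=\varnothing$ I would invoke the $\mathcal{T}$-self-adjointness of $H_\alpha$ already recorded above together with \cite[Sec.~5.2.5.4]{KS-book}; the essential spectrum was located immediately before the statement via \cite[Corol.~IX.4.2]{Edmunds-Evans}, and since the complement of the essential spectrum is connected, every point of $\sigma(H_\alpha)\setminus\sigma_\mathrm{ess}(H_\alpha)$ is an isolated eigenvalue of finite algebraic multiplicity. The whole proof therefore reduces to computing $\sigma_\mathrm{p}(H_\alpha)$ off the essential spectrum, after which $\sigma_\mathrm{c}(H_\alpha)=\sigma(H_\alpha)\setminus\sigma_\mathrm{p}(H_\alpha)=[0,+\infty)+i\{-1,+1\}$ follows from the disjoint decomposition $\sigma=\sigma_\mathrm{r}\cup\sigma_\mathrm{p}\cup\sigma_\mathrm{c}$.

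To find the eigenvalues, I would solve $-\psi''+i\sgn(x)\psi=z\psi$ on each half-line under the constraint $\psi\in\sii(\Real)$. On $\mathbb{R}_+$ the admissible solution is $\psi(x)=Be^{-k_+(z)x}$, the principal branch ensuring $\Re k_+(z)>0$ as long as $z\notin[0,+\infty)+i$; symmetrically, on $\mathbb{R}_-$, continuity at $0$ forces $\psi(x)=Be^{k_-(z)x}$ provided $\Re k_-(z)>0$, i.e.\ $z\notin[0,+\infty)-i$. The derivative-jump condition $\psi'(0^+)-\psi'(0^-)=\alpha\psi(0)$ encoded in $\Dom(H_\alpha)$ then yields the implicit equation
\beq\label{eqKalpha}
  k_+(z)+k_-(z) = -\alpha\,.
\eeq
Squaring~\eqref{eqKalpha} together with the identity $k_+(z)^2+k_-(z)^2=-2z$ gives $2k_+(z)k_-(z)=\alpha^2+2z$, and squaring once more, using $\big(k_+(z)k_-(z)\big)^2=z^2+1$, leads to the linear relation $\alpha^2 z=1-\alpha^4/4$, whose unique solution is $z=\lambda(\alpha)=1/\alpha^2-\alpha^2/4$.

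Since the two squarings may introduce extraneous roots, the next step is to verify by direct substitution, using the principal-branch expansions available from Section~\ref{sResSpec}, that $z=\lambda(\alpha)$ really does solve~\eqref{eqKalpha} whenever $\lambda(\alpha)\notin[0,+\infty)+i\{-1,+1\}$, and that in the converse case one of the decay conditions $\Re k_\pm(\lambda(\alpha))>0$ fails, so no $L^2$-eigenfunction exists. To recast~\eqref{condAlpha} as $\alpha\notin\Gamma$, I would set $\lambda(\alpha)=r+i\sigma_2$ with $r\geq0$ and $\sigma_2\in\{-1,+1\}$, which turns the defining relation into the biquadratic $\alpha^4+4(r+i\sigma_2)\alpha^2-4=0$. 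Using the identity $(r+i\sigma_2)^2+1=r(r+2i\sigma_2)$, the quadratic formula produces
$$
  \alpha^2 = -2(r+i\sigma_2) + 2\sigma_3\sqrt{r(r+2i\sigma_2)}, \qquad \sigma_3\in\{-1,+1\}\,,
$$
and extracting $\alpha=\sigma_1\sqrt{\alpha^2}$ with $\sigma_1\in\{-1,+1\}$ recovers exactly the eight branches $\Gamma_\sigma$ of~\eqref{defDomGamma}.

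The main obstacle will be the careful bookkeeping of principal square roots during the two squarings leading from~\eqref{eqKalpha} to $\lambda(\alpha)$: one has to rule out spurious roots and verify that the eight-parameter family $\{\Gamma_\sigma\}_{\sigma\in\{-1,+1\}^3}$ exhausts every configuration in which $\lambda(\alpha)$ lands on the essential spectrum. Everything else is a routine ODE computation once the jump condition at $x=0$ is correctly read off from $\Dom(H_\alpha)$.
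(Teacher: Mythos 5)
Your overall structure mirrors the paper's sketch closely: residual spectrum via $\mathcal{T}$-self-adjointness, essential spectrum via the Edmunds--Evans stability result, the matching-condition ODE analysis for the point spectrum, and the algebraic reduction to the eight branches $\Gamma_\sigma$. The matching condition $k_+(z)+k_-(z)=-\alpha$, the two squarings yielding $\lambda(\alpha)=1/\alpha^2-\alpha^2/4$, and the quadratic-formula derivation of $\Gamma$ are all computed correctly.

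There is, however, a genuine gap precisely at the step you yourself flag as the main obstacle. The claim that direct substitution shows $z=\lambda(\alpha)$ solves $k_+(z)+k_-(z)=-\alpha$ whenever $\lambda(\alpha)\notin[0,+\infty)+i\{-1,+1\}$ is in fact false. With the principal branch one has $\Re k_\pm(z)>0$ for every $z$ off the two rays, hence
\[
  \Re\big(k_+(z)+k_-(z)\big)>0\,,
\]
so the matching condition cannot hold for any such $z$ unless $\Re\alpha<0$. A concrete counterexample is $\alpha=2$: then $\lambda(\alpha)=-3/4\notin[0,+\infty)+i\{-1,+1\}$, yet a direct computation gives $k_+(-3/4)+k_-(-3/4)=\sqrt{3/4+i}+\sqrt{3/4-i}=(1+i/2)+(1-i/2)=2=+\alpha\neq-\alpha$. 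One can also see independently that $-3/4\notin\sigma(H_2)$, since $\Re h_\alpha[\psi]=\|\psi'\|^2+\alpha|\psi(0)|^2\geq0$ for $\alpha\geq0$ places $\Num(H_\alpha)$ in the closed right half-plane. The point is that after unsquaring, $k_+(\lambda(\alpha))+k_-(\lambda(\alpha))$ equals one of $\pm\alpha$ or $\pm 2i/\alpha$, and which one is selected depends on $\alpha$ (in particular on the sign of $\Re\alpha$), not merely on whether $\lambda(\alpha)$ lies off the rays. So "$\alpha\notin\Gamma$" alone is not the correct characterisation, and the verification you describe would not go through as stated. Since the paper itself gives no details beyond "can be solved explicitly" and even remarks that the eigenvalue exists for all real nonzero $\alpha$ (which fails, e.g., at $\alpha=2$ by the numerical-range argument above), this gap appears to be inherited from the source; but your proposal does not close it, and an honest proof must record the additional branch/sign constraint on $\alpha$.
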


\subsection{Step-like potential}
To have a definitive support for the existence of discrete spectra
for the operators of the type~\eqref{operator.weak}, 
here we consider $\eps=1$ and the following step-like profile 
for the perturbing potential:
$$
  V_{a,b}(x) := \left(- i \, \sgn(x) - b\right) \chi_{[-a,a]}(x) 
  \,,
$$
where $a>0$ and $b \in \Com$.
We set $H_{a,b} := H+V_{a,b}$.
By Proposition~\ref{propStabEss}, 
\begin{equation}\label{ess.step}
  \sigma_\mathrm{ess}(H_{a,b}) = [0,+\infty) + i \, \{-1,+1\}
\end{equation}
for all $a>0$ and $b \in \Com$.

The differential equation of the eigenvalue problem 
$H_{a,b}\psi=\lambda\psi$ can be solved in terms
of sines and cosines in each of the intervals
$(-\infty,-a)$, $(-a,a)$ and $(a,+\infty)$.
Choosing integrable solutions in the infinite intervals
and gluing the respective solutions at~$\pm a$
by requiring the $W^{2,2}$-regularity, 
we arrive at the following equation
\begin{equation}\label{implicit}
  \big[\sqrt{\lambda^2+1}-\lambda-b\big] 
  \frac{\sin\big(2 a \sqrt{\lambda+b}\big)}{\sqrt{\lambda+b}}
  -i \big(\sqrt{\lambda+i}-\sqrt{\lambda-i}\big) \, 
  \cos\big(2 a \sqrt{\lambda+b}\big)
  = 0
\end{equation}
for eigenvalues~$\lambda$ satisfying $|\Im\lambda| < 1$
and $\lambda+b \not\in (-\infty,0)$.
The equation for the case $\lambda=-b$ is recovered after taking
the limit $\lambda \to -b$ in the above equation.
For eigenvalues~$\lambda$ satisfying 
$|\Im\lambda| < 1$ and $\lambda+b\in(-\infty,0)$, we find
$$
\big[\sqrt{\lambda^2+1}-\lambda-b\big] 
  \frac{\sinh\big(2 a \sqrt{|\lambda+b|}\big)}{\sqrt{|\lambda+b|}}
  -i \big(\sqrt{\lambda+i}-\sqrt{\lambda-i}\big) \, 
  \cosh\big(2 a \sqrt{|\lambda+b|}\big)
  = 0
  \,.
$$
In the same manner, it is possible to derive equations
for eigenvalues~$\lambda$ satisfying $|\Im\lambda| \geq 1$.
However, we shall not present these formulae, 
for below we are particularly interested in real eigenvalues. 
We only mention that it is easy to verify that no point 
in the essential spectrum~\eqref{ess.step} can be an eigenvalue.

Henceforth, we investigate the existence of real eigenvalues.
Moreover, we restrict to real~$b$ and look for eigenvalues $\lambda > -b$,
so that it is enough to work with~\eqref{implicit}.
First of all, notice that, for any $\lambda>-b$ satisfying~\eqref{implicit}, 
$\sin\big(2 a \sqrt{\lambda+b}\big)$ never vanishes.
At the same time, $\Im\sqrt{\lambda+i}$ is non-zero for real~$\lambda$.
We can thus rewrite~\eqref{implicit} as follows
$$
  \cot\big(2 a \sqrt{\lambda+b}\big) 
  = -\frac{\sqrt{\lambda^2+1}-(\lambda+b)}{2 \, \sqrt{\lambda+b} \ \Im\sqrt{\lambda+i}}
  \sim b 
  \qquad \mbox{as} \quad
  \lambda \to + \infty \,.
$$ 
Since there is a periodic function with range~$\Real$
on the left hand side, it follows from the asymptotics that~$H_{a,b}$
possesses infinitely many eigenvalues for every real~$b$.
Let us highlight this result by the following proposition.
\begin{proposition}
For any $a>0$ and $b \in \Real$, 
$H_{a,b}$ possesses infinitely many distinct real discrete eigenvalues. 
\end{proposition}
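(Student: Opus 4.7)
The plan is to promote the heuristic already indicated before the statement into a rigorous intermediate value argument, exploiting the periodicity of cotangent and the boundedness of the right-hand side in the reformulated equation.

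First, I would restrict attention to real $\lambda$ with $\lambda>-b$ (equivalently $\lambda+b>0$), so that $\sqrt{\lambda+b}$ is real and \eqref{implicit} applies. As observed in the excerpt, for such $\lambda$ the sine factor in \eqref{implicit} cannot vanish at a real eigenvalue (otherwise, since $\Im\sqrt{\lambda+i}\neq 0$ for real $\lambda$, the cosine term would force a contradiction), so \eqref{implicit} is equivalent to
\[
  F(\lambda)\;:=\;\cot\!\big(2a\sqrt{\lambda+b}\big)\;-\;R(\lambda)\;=\;0,
  \qquad
  R(\lambda):=-\frac{\sqrt{\lambda^2+1}-(\lambda+b)}{2\sqrt{\lambda+b}\,\Im\sqrt{\lambda+i}}.
\]
The first task is to verify that $R$ is continuous on $(\max(0,-b),+\infty)$ and that $R(\lambda)\to b$ as $\lambda\to+\infty$; this is an elementary computation using $\sqrt{\lambda^2+1}-\lambda = 1/(\lambda+\sqrt{\lambda^2+1})\to 0$ and $\Im\sqrt{\lambda+i}\sim 1/(2\sqrt{\lambda})$, precisely matching the asymptotic $R(\lambda)\sim b$ announced just before the proposition.

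Second, I set $\phi(\lambda):=2a\sqrt{\lambda+b}$, a strictly increasing continuous bijection from $(-b,+\infty)$ onto $(0,+\infty)$. For each integer $n$ sufficiently large, define $I_n:=\phi^{-1}\big((n\pi,(n+1)\pi)\big)$; on $I_n$ the function $\lambda\mapsto\cot\phi(\lambda)$ is continuous and surjects onto $\mathbb{R}$, taking values arbitrarily close to $+\infty$ near the left endpoint and arbitrarily close to $-\infty$ near the right endpoint. Since $R$ is bounded on $I_n$ (for $n$ large, $R$ lies in a small neighborhood of $b$), the intermediate value theorem yields at least one $\lambda_n\in I_n$ with $F(\lambda_n)=0$. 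Because the intervals $I_n$ are pairwise disjoint and $\lambda_n\to+\infty$, this produces infinitely many distinct real solutions of \eqref{implicit}.

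Third, I would convert each such $\lambda_n$ into a genuine eigenvalue: the construction of $\psi$ by gluing $L^2$ solutions on $(-\infty,-a)$, $(-a,a)$, $(a,+\infty)$ under $W^{2,2}$-matching at $\pm a$ is precisely what produced \eqref{implicit}, so $\lambda_n\in\sigma_\mathrm{p}(H_{a,b})$. Finally, I would verify the \emph{discrete} adjective: since $\lambda_n\in\mathbb{R}$ and $\sigma_\mathrm{ess}(H_{a,b})=[0,+\infty)+i\{-1,+1\}$ by~\eqref{ess.step}, every $\lambda_n$ is automatically isolated from the essential spectrum, and finite algebraic multiplicity follows from the fact that the second-order ODE $-\psi''+i\,\mathrm{sgn}(x)\psi+V_{a,b}\psi=\lambda\psi$ admits a one-dimensional space of $L^2$-solutions (uniqueness up to scaling of the decaying solution at $+\infty$). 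Distinctness is built in since the $\lambda_n$ lie in the pairwise disjoint intervals $I_n$.

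The main obstacle is really only bookkeeping: checking that $R$ remains uniformly bounded on $I_n$ for all large $n$ (which is immediate from the asymptotic $R\to b$, provided one picks $n$ large enough so that $I_n$ lies inside a region where $R$ is within, say, $1$ of $b$), and confirming that the $\phi$-preimage of each open interval $(n\pi,(n+1)\pi)$ is nonempty and open in $\lambda$ — both trivial from the bijectivity of $\phi$. Everything else is a direct IVT argument.
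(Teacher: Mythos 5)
Your argument is correct and is essentially the same as the paper's (terse) one: you rewrite the implicit equation as $\cot(2a\sqrt{\lambda+b})=R(\lambda)$, note $R(\lambda)\to b$, and use the periodicity and full range of cotangent to conclude there are infinitely many real solutions escaping to $+\infty$. You have simply made the intermediate value step and the discreteness/finite-multiplicity bookkeeping explicit, which the paper leaves implicit.
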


Several real eigenvalues of~$H_{a,b}$ 
as functions of $b \in \Real$ are represented in Figure~\ref{figev}.

\begin{figure}[h]\begin{center}
\includegraphics[width=0.7\textwidth]{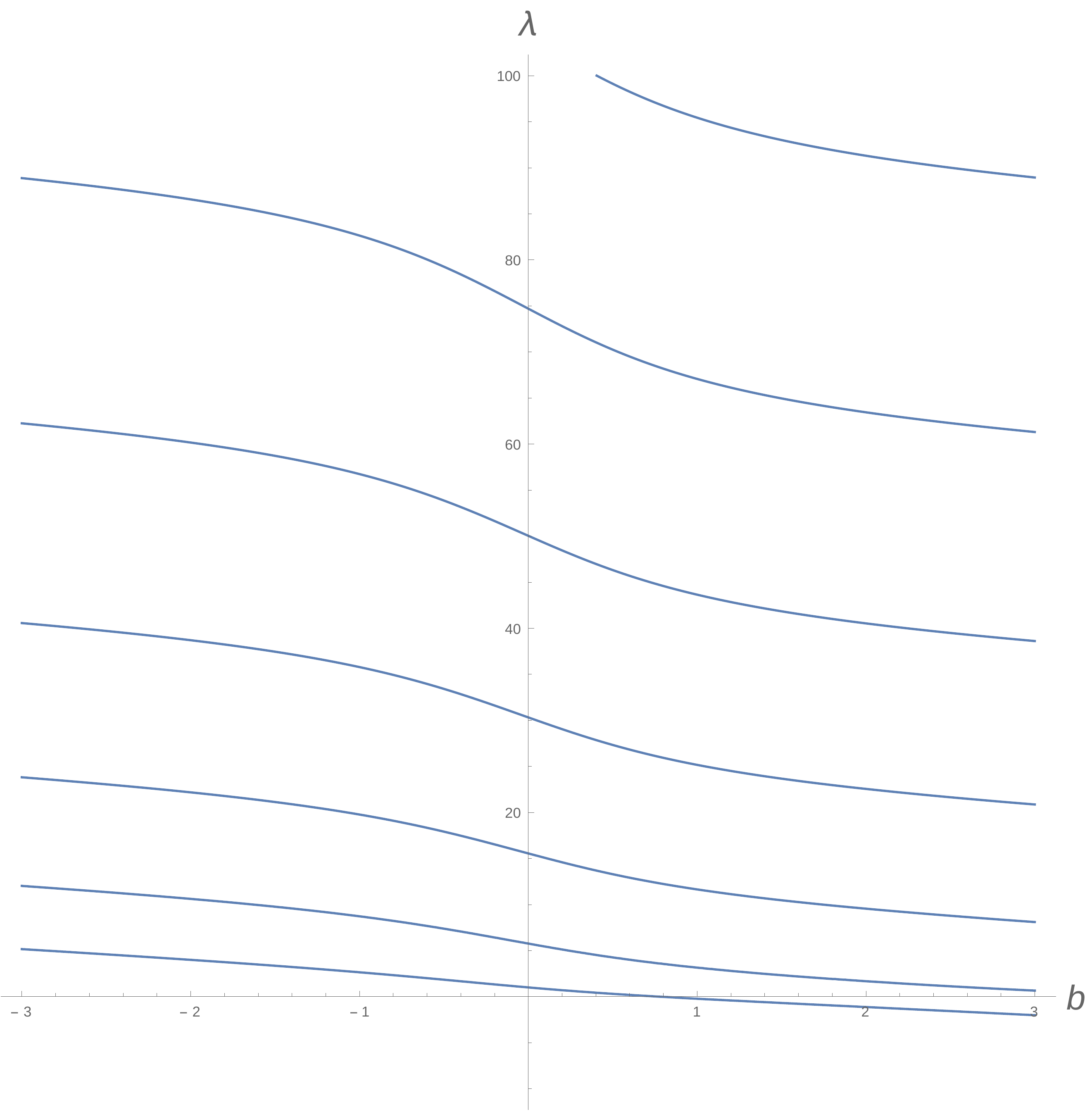}
\caption{{Dependence of real eigenvalues of~$H_{a,b}$ 
on~$b$ for $a=1$.}}\label{figev}
\end{center}
\end{figure}

\subsection{Dirichlet realisation}\label{Sec.Dirichlet}
Since the spectrum of~$H$ is the union of the two half-lines 
$\mathbb{R}_++i$ and $\mathbb{R}_+-i$, 
one might have expected the operator~$H$ to behave as some sort of 
decoupling of two operators $-\frac{d^2}{dx^2} + i$ in $\sii(\Real_+)$
and $-\frac{d^2}{dx^2}-i$ in $\sii(\Real_-)$.
The existence of non-trivial pseudospectra 
(\cf~Theorem~\ref{thmEstResIntro}) actually indicates 
that this is not the case.
Indeed, the situation strongly depends 
on the way the operator is defined, emphasising the importance of the choice
of domain in the pseudospectral behaviour of an operator.

For comparison,
let~$H^D$ be the operator in $\sii(\Real)$ 
that acts as~$H$ in $\Real_+^*:=(0,+\infty)$ 
and $\Real_-^*:=(-\infty,0)$,
but satisfies an extra Dirichlet condition at zero, \ie,  
\[
 \Dom(H^D) := \big(W^{2,2}\cap W_0^{1,2}\big)\big(\mathbb{R}\setminus\{0\}\big)\,.
\]
Considering this operator instead of~$H$ means 
that the previous matching conditions at $x=0$,
$u(0^-) = u(0^+)$ and $u'(0^-) = u'(0^+)$ for $u \in \Dom(H)$,
are replaced by the conditions $u(0^-) = 0 = u(0^+)$ for $u \in \Dom(H^D)$.

We can write~$H^D$ as a direct sum
\beq\label{decompA0}
  H^D = H^D_- \oplus H^D_+
  \,,
\eeq
where $H^D_\pm$ are operators in $L^2(\mathbb{R}_\pm^*)$ defined by
\beq\label{defA0+-}
  H^D_\pm := -\frac{d^2}{dx^2} \pm i 
  \,, \qquad
  \Dom(H^D_\pm) := \big(W^{2,2}\cap W_0^{1,2}\big)(\mathbb{R}_\pm^*)
  \,.
\eeq
Since the spectra of~$H^D_\pm$ are trivially found,
we therefore have (see \cite[Sec.~IX.5]{Edmunds-Evans})
\[
  \sigma(H^D) 
  = \sigma(H^D_-)\cup\sigma(H^D_+) = \mathbb{R}_++i\,\{-1,+1\}
  \,.
\]
Hence~$H^D$ and~$H$ have the same spectrum
(\cf~Proposition~\ref{propSpectrum}).

We can also decompose the resolvent of~$H_D$ as follows
\[
 (H^D-z)^{-1} = (H^D_--z)^{-1}\oplus(H^D_+-z)^{-1}
\]
for every $z \not\in \mathbb{R}_++i\,\{-1,+1\}$.
Since~$H^D_\pm$ are obtained from self-adjoint operators shifted by a constant, 
they both have trivial pseudospectra. 
Consequently, $H^D$~has trivial pseudospectra as well. 
In other words, although~$H^D$ and~$H$ have the same spectrum, 
that of~$H$ is far more unstable (\cf~Theorem~\ref{thmEstResIntro}).

To be more specific, let us write down 
the integral kernel $\mathcal{R}_z^D$ of $(H^D-z)^{-1}$.
For $f\in L^2(\mathbb{R})$, 
the function $(H^D-z)^{-1}f$ has the form~(\ref{exprSol}), 
where the constants $A_+,A_-,B_+,B_-$ are uniquely determined 
by the Dirichlet condition at~$0$ together with the condition 
$(H^D-z)^{-1}f(x) \to 0$ as $x\to\pm\infty$. 
The former yields $B_+ = -A_+$ and $B_- = -A_-$, 
while the latter gives the following values for $A_+$ and $A_-$:
\[
  A_+ = \frac{1}{2k_+(z)}\int_0^{+\infty}f(y) \, e^{-k_+(z)y} \, dy
  \,, \qquad
  A_- = -\frac{1}{2k_-(z)}\int_{-\infty}^0f(y) \, e^{k_+(z)y} \, dy
  \,.
\]
Eventually, we obtain the following expression for the integral kernel:
\[
  \mathcal{R}_z^D(x,y) 
  = \frac{1}{2k_\pm(z)}\left(
  e^{-k_\pm(z)|x-y|}-e^{-k_\pm(z)(|x|+|y|)}
  \right)\chi_{\Real_\pm}(y)
  \,, \qquad \pm x>0
  \,.
\]

Now, as in Section~\ref{ssDefA+V},
we can consider the perturbed operator 
$$
  H_\eps^D := H^D \dot{+} \eps V
$$
for any $V\in L^1(\mathbb{R})$. 
We claim that,
under the additional assumption $V\in L^1(\mathbb{R},(1+x^2)\,dx)$,
the Hilbert-Schmidt norm of the Birman-Schwinger operator
\[
  K_z^D := |V|^{1/2} \, (H^D-z)^{-1} \, V_{1/2} 
\]
is uniformly bounded with respect to $z\notin\mathbb{R}^++i\{-1,1\}$. 
To see it, let us first assume $x>0$. 
If $|z-i|\leq c_0$ for some positive $c_0$, then
\begin{align*}
  |\mathcal{R}_z^D(x,y)| 
  & \leq 
  \frac{1}{2|k_+(z)|}\left(\big|e^{-k_+(z)|x-y|}-1\big|
  +\big|(e^{-k_+(z)(|x|+|y|)}-1\big|\right) 
  \\
  & \leq \frac{|x-y|+|x|+|y|}{2}
  \,,
\end{align*}
where we have used the inequality $|e^{-\omega}-1|\leq |\omega|$ for $\Re\omega\geq0$.
On the other hand, if $|z-i|>c_0$, 
then $|k_+(z)|$ is uniformly bounded from below, 
hence $\mathcal{R}_z^D(x,y)$ is uniformly bounded  
with respect to $x\geq0$, $y\in\mathbb{R}$ and~$z$ such that $|z-i|>c_0$.
The same analysis can be performed for $x<0$, 
thus there exists $C>0$ such that, 
for all $(x,y)\in\mathbb{R}^2$ and $z\notin[0,+\infty)+i\{-1,1\}$, 
\[
 |\mathcal{R}_z^D(x,y)|\leq C(1+|x|+|y|)\,.
\]
Consequently, the computation of the Hilbert-Schmidt norm of $K_z^D$ yields
\beq
  \|K_z^D\|_\mathrm{HS}
  \leq C \int_\mathbb{R}(1+x^2)|V(x)| \, dx
  \,.
\eeq

After noticing that 
$\sigma_\mathrm{ess}(H^D_\eps) = \sigma_\mathrm{ess}(H^D)$ 
for all $\eps\in\Real$
(by the same arguments as in the proof of Proposition~\ref{propStabEss}),
the Birman-Schwinger principle
(\ie~a version of Theorem~\ref{thmBirmanSchwinger} for~$H^D_\eps$)
leads to the following statement.
\begin{proposition}\label{Prop.Hardy}
Let $V\in L^1\left(\mathbb{R},(1+x^2)\,dx\right)$.
There exists a positive constant $\eps_0>0$ such that, 
for all $\eps \in (0,\eps_0)$, we have 
\[
  \sigma(H^D_\eps) = \sigma(H^D) = \mathbb{R}_++i\,\{-1,1\}
  \,.
\]
\end{proposition}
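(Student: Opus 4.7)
The strategy is a direct transcription of the Birman–Schwinger argument already developed for~$H_\eps$, simplified by the fact that the resolvent kernel $\mathcal{R}_z^D$ is, as the preceding discussion shows, genuinely regular (it has no blow-up at infinity, in contrast with~$\mathcal{R}_z$). First I would invoke the analogue of Theorem~\ref{thmBirmanSchwinger} for the Dirichlet realisation: since~$H^D$ is m-sectorial on $W^{2,2}\cap W_0^{1,2}(\mathbb{R}\setminus\{0\})$ with resolvent set containing $\Com\setminus(\mathbb{R}_++i\{-1,+1\})$, the construction of Section~\ref{ssBS} goes through verbatim, so that for any $z\notin\sigma(H^D)$,
\[
  z\in\sigma_\mathrm{p}(H^D_\eps)
  \quad\Longleftrightarrow\quad
  -1\in\sigma(\eps K_z^D).
\]

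Next I would use the uniform Hilbert–Schmidt estimate
$
  \|K_z^D\|_\mathrm{HS}\leq C\,\|(1+|\cdot|^2)V\|_{L^1(\Real)}
$
established just before the statement, valid for every $z\notin\mathbb{R}_++i\{-1,+1\}$. Setting
\[
  \eps_0 := \bigl(C\,\|(1+|\cdot|^2)V\|_{L^1(\Real)}\bigr)^{-1},
\]
one has $\|\eps K_z^D\|\leq \|\eps K_z^D\|_\mathrm{HS}<1$ for every $\eps\in(0,\eps_0)$ and every $z\notin\sigma(H^D)$, so that $-1$ lies in the resolvent set of $\eps K_z^D$. By the Birman–Schwinger equivalence above, no point outside $\sigma(H^D)=\mathbb{R}_++i\{-1,+1\}$ can be an eigenvalue of~$H^D_\eps$, whence $\sigma_\mathrm{p}(H^D_\eps)\subset\mathbb{R}_++i\{-1,+1\}$.

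To conclude, I would invoke the stability of the essential spectrum: by Lemma~\ref{Lem.compact} applied to $H^D$ in place of $H$ (the proof only uses compactness of $|V|^{1/2}(H^D-z)^{-1}$, which follows from the same Hilbert–Schmidt estimate on $\mathcal{R}_z^D$ together with the argument in the proof of Lemma~\ref{Lem.BS}), the resolvent difference $(H^D_\eps-z)^{-1}-(H^D-z)^{-1}$ is compact; then repeating Proposition~\ref{propStabEss} — in particular the connectedness of the exterior of $\mathbb{R}_++i\{-1,+1\}$ — yields $\sigma_\mathrm{ess}(H^D_\eps)=\sigma_\mathrm{ess}(H^D)=\mathbb{R}_++i\{-1,+1\}$. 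Combining this with the absence of eigenvalues outside $\sigma_\mathrm{ess}$ gives $\sigma_\mathrm{disc}(H^D_\eps)=\varnothing$ and hence $\sigma(H^D_\eps)=\sigma(H^D)$, as required.

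The plan is almost mechanical; the only step that requires a small amount of care is checking that the variational/weak-formulation apparatus of Section~\ref{sPert} (definition of $H^D_\eps$ via the form sum $h^D+\eps v$, relative boundedness of $v$ with respect to the Dirichlet quadratic form, and the analogue of~\eqref{resolvent.weak} with $(H^D)^*$ in place of $H^*$) transfers cleanly to the Dirichlet setting. This is straightforward because $v$ is controlled by $q$ on $W^{1,2}(\Real)\supset W_0^{1,2}(\Real\setminus\{0\})$, so no genuine obstacle arises; the uniform $z$-independence of the Hilbert–Schmidt bound — which is what fails catastrophically for $H$ itself — is exactly what makes the statement work here.
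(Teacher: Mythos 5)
Your proposal is correct and follows precisely the paper's own argument: the uniform Hilbert--Schmidt bound on $K_z^D$ just established in the text, the Birman--Schwinger equivalence carried over to $H^D_\eps$, and the stability of the essential spectrum via the analogue of Proposition~\ref{propStabEss}. The only addition you make is the explicit choice $\eps_0 = \bigl(C\,\|(1+|\cdot|^2)V\|_{L^1(\Real)}\bigr)^{-1}$, which the paper leaves implicit.
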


In other words, in the simpler situation of the operator~$H^D$, 
we are able to prove the absence of weakly coupled eigenvalues. 
Proposition~\ref{Prop.Hardy} can be considered as some sort
of ``Hardy inequality'' or ``absence of virtual bound state''
for the non-self-adjoint operator~$H^D$.
Let us also notice that a similar result 
has been established by Frank~\cite{Frank_2011}
in the case of Schr\"odinger operators with complex potentials
in three and higher dimensions.

\subsection*{Acknowledgment}
%
The authors are grateful to Mark Embree for his Figure~\ref{fig.Embree}
and to Petr Siegl for valuable suggestions.
The authors also thank the anonymous referee for helpful comments.
The research was partially supported
by the project RVO61389005 and the GACR grant No.\ 14-06818S.
The first author acknowledges the support of the ANR project NOSEVOL.
The second author also acknowledges the award from 
the \emph{Neuron fund for support of science},
Czech Republic, May 2014.

%
\addcontentsline{toc}{section}{References}
\bibliography{bib}
\bibliographystyle{amsplain}
\end{document}